\documentclass[11pt,a4paper]{article}
\usepackage[utf8]{inputenc}
\usepackage{todonotes}
\usepackage{todonotes}
\usepackage{marginnote}
\usepackage[]{geometry}
\usepackage[mathcal]{euscript}
\usepackage{bm,url}                     
\usepackage{amsfonts}
\usepackage{marginnote}
\usepackage[]{geometry}
\usepackage{amssymb}
\usepackage{amsmath}
\usepackage{amsthm}
\usepackage{graphicx}               
\usepackage{natbib}                 
\usepackage{colortbl}               
\usepackage{booktabs}
\usepackage[mathcal]{euscript}
\usepackage{bm,url}                     
\usepackage{amsfonts}
\usepackage{ulem,xpatch}
\usepackage{amsmath}
\usepackage{amsthm}
\usepackage{graphicx}               
\usepackage{natbib}                 
\usepackage{colortbl}               
\usepackage{booktabs}
\newtheorem{definition}{Definition}

\newtheorem{theorem}{Theorem}

\newtheorem{lemma}{Lemma}
\newtheorem{proposition}{Proposition}

\newtheorem{cor}{Corollary}

\usepackage{epstopdf}
\usepackage[english]{babel}
\usepackage{amsfonts}
\usepackage{color}
\usepackage{array}

\usepackage{mathrsfs}
\usepackage{hyperref}

\newcommand{\eps}{\varepsilon}
\newcommand{\ind}{\mathbb{I}}
\newcommand{\pr}{\mathbb{P}}
\newcommand{\B}{\mathcal{B}}

\newcommand{\E}{\mathbb{E}}

\newcommand{\C}{\mathcal{C}}

\newcommand{\Vor}{\mbox{Vor}}
\newcommand{\tr}{\mbox{tr}}
\newcommand{\argmin}{\mbox{argmin}}

\begin{document}
\begin{center}
	Catherine Aaron$^a$ and Alejandro Cholaquidis$^b$\\
	$^a$ Universit\'e Blaise-Pascal Clermont II, France\\
	$^b$ Centro de Matem\'atica, Universidad de la Rep\'ublica, Uruguay\\
\end{center}

\begin{abstract}
Given a sample of a random variable 
supported by a smooth compact manifold $M\subset \mathbb{R}^d$, we propose a test
to decide whether the boundary of $M$ is empty or not with no preliminary support estimation.
The test statistic is based on the maximal distance between a sample point and the average of its $k_n$-nearest neighbors.
We prove that the level of the test can be estimated, 
that, with probability one, 
its power is one for $n$ large enough, and that there exists a consistent 
decision rule.  Heuristics for choosing a convenient value for the $k_n$ parameter and identifying observations close to the boundary
are also given.
We provide a simulation study of the test.
\end{abstract}

\section{Introduction}
Given an i.i.d. sample $X_1,\dots,X_n$ of $X$ drawn according to an unknown distribution
$\pr_X$ on $\mathbb{R}^d$,
geometric inference deals with the problem of estimating the support, 
$M$, of $\pr_X$, its boundary, $\partial M$, or any possible functional 
of the support, such as the measure of its boundary, for instance.
 These problems have been widely studied when $\pr_X$
is uniformly continuous with respect to Lebesgue measure, i.e. when the support is full dimensional.
We refer to \cite{cheva:76} and \cite{dw:80} for prior work on support estimation, \cite{cuevas:10} 
for a review of support estimation,
 \cite{crc:04} for estimation of the boundary, \cite{cfr:07} for estimation of the measure of the boundary, \cite{berr:14} 
 for estimation of the integrated mean curvature 
and \cite{aar:16} for the recognition of topological properties having a support estimator homeomorphic to the support.
The lower dimensional case (that is, when the support of the distribution is a $d'$-dimensional manifold
with $d'<d$) has recently gained importance due to its 
connection with non-linear dimensionality reduction techniques 
(also known as \textit{manifold learning}), as well as \textit{persistent homology}.  \cite{sma11}  illustrates the link between topology and unsupervised learning.
In 
\cite{fe:15} a test deciding whether the support lies near a lower dimensional manifold 
or not is proposed. In 
 \cite{manest} or \cite{ge:17} minimax rates for manifold estimation are given under different hypotheses. 
In \cite{lev:17} non-asymptotic bounds for manifold estimation and related quantities such as tangent spaces 
and curvature are derived. In these papers the manifolds are supposed without boundary.

Regarding support estimation, it would be natural to think that some of the proposed estimators (in the full dimensional framework) 
would still be suitable.
 For instance, in \cite{sma08}, assuming that $M$ is smooth enough,
it is proved that for $\eps$ small enough, the Devroye--Wise estimator $\hat{M}_{\eps}=\bigcup_{i=1}^n \mathcal{B}(X_i,\eps)$ deformation retracts to $M$
and therefore the homology of $\hat{M}_\eps$ equals the homology of $M$ (see Proposition 3.1 in \cite{sma08}).
Considering boundary estimation, it is not possible to directly adapt the ``full dimensional'' methods since in this case
the boundary is estimated by the boundary of the estimator.
Unfortunately, when 
the support estimator is full dimensional (which is typically the case, as for example in the Devroye--Wise estimator  but also for more recent manifold estimators) this idea is hopeless (see Figure \ref{fig:dw}).
 \begin{figure}[!h]
 	\centering
 	\includegraphics[scale=2]{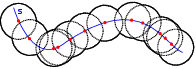}
 	\caption{
 	A one dimensional set $M$ with boundary (the two extremities of the line), sample drawn on $M$ 
 	and the associated Devroye--Wise $\hat{M}_r$ estimator of $M$. Note that $\partial \hat{M}_r$ 
 	is far from $\partial M$.}%
 	\label{fig:dw}
 \end{figure}

As far as our knowledge extends, there are only a few $d'$-dimensional support estimators, see \cite{lev:16} or \cite{ma:16}; they all  require support without boundary  thus the classical plug-in idea of estimating the boundary of the support using the boundary of an 
 estimator can not be used.

In the lower dimensional case, before trying to estimate the boundary of the support, one has
to be able to decide whether it has a boundary or not. The answer 
 provides topological information about the manifold that may be useful.
 For instance, if there is no boundary, the support estimator proposed in \cite{lev:16}
can be used. Moreover, a  compact, simply connected manifold without boundary is homomorphic to a sphere,
as follows from the well known (and now proved) Poincaré conjecture.
When the test decides there is a boundary, one can naturally want to estimate it, or at least estimate the number of 
its connected components, which is an important topological invariant (for instance the surfaces, i.e. the $2$-dimensional 
manifolds, are topologically determined by their orientability, their Euler characteristic, and the number of the components
of the boundary).
Testing for the presence of boundary can also be useful as a preliminary step when considering the problem 
of density estimation on a manifold.
Roughly speaking, when the support is smooth enough and has no boundary, a kernel density estimator will work. However, when the support has a boundary, 
a bias appears near to it. In \cite{berr:17} a correction taking into account the distance to the boundary, also based on a barycenter moving statistics 
(calculated with a kernel instead of nearest neighbors) is proposed.
It allows decreasing the bias but may increase the variance and so should only be performed when necessary, that is,
when the support has a boundary.

The aim of the present paper is to provide a statistical test to decide whether the boundary of the support is empty or not and, when there is a boundary,
to provide an heuristic method to identify observations close to the boundary and estimate the number of 
connected components of the boundary.

This paper is organized as follows.
In Section $2$ we introduce the notation used throughout the paper.
In Section $3$ we present the test statistic, the associated theoretical results, a way to select suitable values
for the parameter $k_n$ and perform a small simulation study.
In Section $4$ we present an heuristic algorithm that identifies points located 
close to the boundary and estimates the number of connected
components of the boundary. Finally,  Section $5$ is devoted to the proofs.

\section{Notation and geometric framework}

If $B\subset\mathbb{R}^d$ is a Borel set, we will denote by $|B|$ its Lebesgue measure and by $\overline{B}$ 
its closure.   Given a set $A$ on a topological space, the interior of $A$ with respect to the underlying topology is denoted by $\mathring{A}$.
The $k$-dimensional closed ball of radius $\varepsilon$ centred at $x$ will be denoted by
 $\mathcal{B}_k(x,\varepsilon)\subset \mathbb{R}^d$ (when $k=d$ the index will be omitted) and 
its Lebesgue measure will be denoted by $\sigma_k=|\mathcal{B}_k(x,1)|$.
When $A=(a_{ij})$, $(i=1,\dots,m \text{ , } j=1,\dots,n)$ is a matrix, we will write,  $\|A\|$ the euclidean norm of $A$,
 $\|A\|_\infty=\max_{i,j} |a_{ij}|$ and $\|A\|_{\text{op}}$ the operator norm of $A$.  
The transpose of $A$
will be denoted  $A'$. For the case $n=m$, we will write $\det(A)$ and $\tr(A)$ for the determinant 
and trace of $A$, respectively.

Given a $\mathcal{C}^2$ function $f$, $\vec\nabla f$ denotes its gradient and $H_f$ its Hessian matrix.
We will denote by 
$\Psi_{d'}(t)$ the cumulative distribution function of a $\chi^2(d')$ distribution and $F_{d'}(t)=1-\Psi_{d'}(t)$.

In what follows $M\subset \mathbb{R}^d$ is a  $d'$-dimensional compact manifold of class 
 $\mathcal{C}^2$ (also called a $d'$-regular surface of class $\C^2$). We will consider the Riemannian metric on $M$ inherited
 from $\mathbb{R}^d$.
When $M$ has a boundary, as a manifold, it will be denoted by $\partial M$.
For $x\in M$, $T_xM$ denotes the tangent space at $x$ and $\varphi_x$ the orthogonal projection on 
the affine tangent space $x+T_xM$.
When $M$ is orientable it has a unique associated volume form $\omega$ such that $\omega(e_1,\ldots, e_{d'})=1$ 
for all oriented orthonormal
bases $e_1,\ldots, e_{d'}$ of $T_xM$. Then if $g:M\rightarrow \mathbb{R}$ is a density function,
 we can define a new measure
$\mu(B)=\int_B gd\omega$, where $B\subset M$ is a Borel set.
Since we will only be interested in measures, which can be defined even if the manifold is not orientable,
 although in a slightly less intuitive
way, the orientability hypothesis will be dropped in the following.

\section{The test}

\subsection{Hypotheses, test statistics and main results}
Throughout this paper, $X_1,\ldots,X_n$ is an i.i.d. sample of a random variable $X$ whose probability
distribution, $\pr_X$, fulfills condition P, and the sequence $(k_n)$ fulfills condition K:
 \begin{itemize}
	
	\item[P.] A probability distribution $\pr_X$ fulfills condition P if 
          there exists a compact,  path connected  $d'$-dimensional manifold of class $\mathcal{C}^2$ $M$ and a density function $f$ such that:
          \begin{enumerate}
           \item $\partial M$ is either empty or of class $\mathcal{C}^2$, 
           \item for all $x\in M$,  $f(x)\geq f_0>0$, $f$ is Lipschitz continuous with constant $K_f$,
	    and, for all measurable $A\subset M$, $\pr_X(A)=\int_{A} f\omega$. In the following $f_1=\max_{x\in M} f(x)$.
          \end{enumerate}
          
	\item[K.] A sequence $\{k_n\}_n\subset \mathbb{R}$ fulfills condition K if   $k_n/n^{1/(d'+1)}\rightarrow 0$  and if
	$k_n/(\ln(n))^4 \rightarrow \infty$  when $d'>1$ and if $k_n/\sqrt{n \ln n}\rightarrow +\infty$ when  $d'=1$
	
\end{itemize}

\begin{definition} \label{def0}
Given an i.i.d. sample $X_1,\ldots,X_n$ of a random row vector $X$ with support 
$M\subset \mathbb{R}^d$, where $M$ is a $d'$-dimensional
manifold with $d'\leq d$, we will denote by $X_{j(i)}$ the $j$-nearest neighbor 
of $X_i$. For a given sequence of positive integers $k_n$, 
let us define, for $i=1,\dots,n$,

$$r_{i,k_n}=\|X_i-X_{k_n(i)}\|\,;\, r_n=\max_{1\leq i\leq n} r_{i,k_n}\,;\,
\mathcal{X}_{i,k_n}=\begin{pmatrix} 
                 X_{1(i)}-X_i\\
		 \vdots\\
		 X_{k_n(i)}-X_i\\
                \end{pmatrix} 
;
\hat{S}_{i,k_n}=\frac{1}{k_n}(\mathcal{X}_{i,k_n})(\mathcal{X}_{i,k_n})'.
$$
where $X_{j(i)}-X_i$ is a row vector, for all $j=1,\ldots,k_n$. Consider $Q_{i,k_n}$ the $d'$-dimensional space spanned by the $d'$ eigenvectors of $\hat{S}_{i,k_n}$ associated to its $d'$ largest eigenvalues. Let $X^*_{k(i)}$ be the normal projection of $X_{k(i)}-X_i$ on $Q_{i,k_n}$ and
$\overline{X}_{k_n,i}=\frac{1}{k_n}\sum_{k=1}^{k_n} X^*_{k(i)}$.\\

Define $\delta_{i,k_n}= \frac{(d'+2)k_n}{r^2_{i,k_n}}\|\overline{X}_{k_n,i}\|^2$, for $i=1,\dots,n$.
Then the proposed test statistic is
$$\Delta_{n,k_n}=\max_{1\leq i\leq n} \delta_{i,k_n}.$$

\end{definition}

We will now explain the heuristic behind the test we will propose.
It will be proved that, under conditions P and K we have $r_n\stackrel{a.s.}{\longrightarrow} 0$ (using that the density is bounded from below and the classic condition
$k_n/n\rightarrow 0$ as in \cite{loe:65} where the concept of nearest neighbors was introduced).
Consider an observation $X_{i_0}$ such that $d(X_{i_0},\partial M)\geq r_{i_0,k_n}$.
The regularity of the manifold  and the continuity of the density given by condition P will imply that the sample $\{r_{i_0,k_n}^{-1}X^*_{1(i_0)},\ldots,r_{i_0,k_n}^{-1}X^*_{k_n(i_0)}\}$ 
``converges'' to an uniform sample on $\mathcal{B}_{d'}(0,1)$, and 
 then $\|\overline{X}_{k_n,i_0}\|r_{i_0,k_n}^{-1}\stackrel{a.s.}{\longrightarrow} 0$. It will also be proved that $\delta_{i_0,k_n}\longrightarrow \chi^2(d')$ 
 in distribution. If $\partial M=\emptyset$, all the observations satisfy $d(X_{i},\partial M)\geq r_{i,k_n}$.
 Even though the $\{\delta_{i,k_n}\}_i$ are not independent, we will obtain an asymptotic result for $\Delta_{n,k_n}$ that involves the $\chi^2(d')$ distribution.
If $\partial M\neq \emptyset$, condition P (the regularity of the boundary and the fact that the density is bounded from below) allows us to (lower) bound the probability that $X$ belongs to 
a neighborhood of the boundary. With this bound we can ensure a.s. the existence of an observation $X_{i_0}$ with $d(X_{i_0},\partial M)=O(\ln n/n)$, and then condition K 
($k_n/(\ln n)^4\rightarrow + \infty$) ensures 
 that $d(X_{i_0},\partial M)\ll  r_{i_0,k_n}$. Note that this condition is stronger than the usual $k_n\rightarrow + \infty$ as in \cite{loe:65}.
  The sample $\{r_{i_0,k_n}^{-1}X^*_{1(i_0)},\ldots,r_{i_0,k_n}^{-1}X^*_{k_n(i_0)}\}$ thus ``looks like''  an uniform sample on  a half ball  and 
  $\|\overline{X}_{k_n,i_0}\|r_{i_0,k_n}^{-1}\stackrel{a.s.}{\longrightarrow} \alpha_{d'}>0$.
 The asymptotic behavior of the test statistic is given in the following four theorems.
The first theorem provides a bound for the level 
when testing $H_0:$ $\partial M=\emptyset$ versus $H_1:$ $\partial M\neq \emptyset$
using the test statistic $\Delta_{n,k_n}$ and rejection region $\{\Delta_{n,k_n}\geq t_n\}$ for some suitable $t_n$.
 The second theorem states that, with probability one, the power of the test is one for $n$ large enough. The third theorem provides a consistent decision rule.
 
\begin{theorem}\label{level}
 Let $k_n$ be a sequence fulfilling condition K. Assume that $X_1,\ldots,X_n$ is an i.i.d. sample drawn according to an unknown distribution $\pr_X$ which fulfills condition P. The test
\begin{equation}\label{test}
 \left\{
 \begin{array}{cc}
 H_0: & \ \partial M=\emptyset\\
H_1: & \partial M\neq \emptyset
\end{array}
\right.
\end{equation}
with the rejection zone
\begin{equation} \label{regzone}
W_n=\left\{\Delta_{n,k_n}\geq F_{d'}^{-1}(9\alpha/(2e^3n)) \right\},
\end{equation}
satisfies $\pr_{H_0}(W_n)\leq \alpha +o(1).$
\end{theorem}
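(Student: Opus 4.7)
The plan is to combine a union bound in $i$ with a sharp per-point tail estimate. Writing $t_n := F^{-1}\bigl(9\alpha/(2e^3n)\bigr)$ for the critical value, I would start from
$$
\pr_{H_0}(W_n) \;\le\; \sum_{i=1}^{n}\pr_{H_0}\bigl(\delta_{i,k_n}\ge t_n\bigr)
\;\le\; n\,\sup_{i\le n}\pr_{H_0}\bigl(\delta_{i,k_n}\ge t_n\bigr),
$$
so the theorem reduces to a non-asymptotic per-point estimate of the form
$$
\pr_{H_0}\bigl(\delta_{i,k_n}\ge t\bigr) \;\le\; \tfrac{2e^3}{9}\, F_{d'}(t)\,\bigl(1+o(1)\bigr),
$$
uniform in $i$, in the tail regime $t = t_n$. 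Condition K forces $t_n\to\infty$, so only this asymptotic-tail regime is needed.

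To prove the per-point bound, I would condition on $X_i$ and on the radius $r = r_{i,k_n}$. By standard order-statistic arguments the remaining $k_n - 1$ neighbours are then conditionally i.i.d.\ with law $\pr_X$ restricted to $\mathcal{B}(X_i,r)\cap M$. Under condition P with $\partial M=\emptyset$, and on the high-probability event that $r_n$ is small (which condition K ensures via the standard nearest-neighbour rate), the local piece $\mathcal{B}(X_i,r)\cap M$ is $O(r^2)$-close in Hausdorff distance to the tangent disk $\mathcal{B}_{d'}(X_i,r)\cap(X_i+T_{X_i}M)$, the density $f$ is constant on that set up to $O(r)$, and local-PCA consistency makes $Q_{i,k_n}$ $O(r)$-close to $T_{X_i}M$. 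After projection onto $Q_{i,k_n}$ and rescaling by $r$, the points $X^*_{j(i)}/r$ are therefore a small perturbation of an i.i.d.\ uniform sample $U_1,\ldots,U_{k_n}$ on the $d'$-dimensional unit ball.

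For that idealised model one has $\mathbb{E}[U_j]=0$, $\mathrm{Cov}(U_j) = I_{d'}/(d'+2)$ and $\|U_j\|\le 1$, so a multivariate Chernoff/Bernstein estimate on $\sqrt{(d'+2)k_n}\,\overline U$, combined with the asymptotic form of the upper $\chi^2(d')$ tail, yields
$$
\pr\!\bigl((d'+2)k_n\|\overline U\|^2\ge t\bigr) \;\le\; C_0\, F_{d'}(t),
$$
in which the explicit value $C_0 = 2e^3/9$ in the statement is produced by the optimisation in the Chernoff exponent (the $e^3$ factor is typical of Bernstein-type bounds for bounded coordinates). The four approximation errors between the real sample and this model -- (i) the $\mathcal{C}^2$ linearisation of $M$, (ii) the Lipschitz variation of $f$, (iii) the PCA estimation of $T_{X_i}M$ by the leading $d'$ eigenvectors of $\hat S_{i,k_n}$, and (iv) the Berry--Esseen remainder in passing from $\overline U$ to $\overline{X}_{k_n,i}/r$ -- are each shown to contribute a multiplicative factor $1+o(1)$, uniformly in $i\le n$. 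The union bound then gives $n\cdot(2e^3/9)\cdot F_{d'}(t_n)\cdot(1+o(1)) = \alpha+o(1)$ by construction of $t_n$.

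The main obstacle is precisely this uniform-in-$i$ control of the four perturbation terms against a $\chi^2$ tail that lives at level $t_n \sim 2\log n$: every approximation error has to be shown negligible against a threshold that is itself growing only logarithmically in $n$, while the tail density $F_{d'}(t_n)$ is exponentially small. This is where condition K is tuned: $k_n/(\ln n)^4 \to \infty$ absorbs the Berry--Esseen $1/\sqrt{k_n}$ error at the logarithmic scale, while $(\ln n)\,k_n^{1+d'}=o(n)$ keeps $r_n$ small enough to kill the geometric and density errors after the $r^{-2}$ rescaling. Carefully turning each of these soft statements into a uniform $1+o(1)$ factor in front of $F_{d'}(t_n)$ will be the delicate technical part of the argument.
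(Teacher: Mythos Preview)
Your overall architecture---union bound in $i$, then a non-asymptotic per-point tail estimate, then perturbation---is exactly the paper's route. The gap is in how you obtain the constant $2e^3/9$ and in what you call the ``Berry--Esseen remainder''.

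The bound $\pr\bigl((d'+2)k_n\|\overline U\|^2\ge t\bigr)\le \tfrac{2e^3}{9}F_{d'}(t)$ for a uniform sample on the ball does \emph{not} fall out of a Chernoff/Bernstein optimisation. The paper gets it from the self-normalised Hotelling-type inequality of Bertail, Gautherat and Harari-Kermadec, which states $\pr\bigl(k_n\,\overline U'\hat S_{k_n}^{-2}\overline U>t\bigr)\le \tfrac{2e^3}{9}F_{d'}(t)$ for any bounded centred law. This bounds the \emph{empirically} normalised statistic, whereas $\delta_{i,k_n}$ uses the true covariance $I_{d'}/(d'+2)$; half of the paper's Lemma on the uniform model is spent controlling $\Sigma^{-2}-\hat S_{k_n}^{-2}$, and the resulting per-point bound is not $\tfrac{2e^3}{9}F_{d'}(t)$ but a function $G_{k_n}(t)$ with two additional exponential correction terms. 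A separate lemma then shows $nG_{k_n}\bigl(t_n(\alpha)+o(1)\bigr)\le\alpha+o(1)$ under condition~K. A generic Chernoff bound will not produce a comparison to the exact $\chi^2$ survival function with this constant, so as written your route to the threshold $F^{-1}\bigl(9\alpha/(2e^3n)\bigr)$ is incomplete.

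Your item~(iv) is dangerous as stated. If ``Berry--Esseen'' means an additive $O(1/\sqrt{k_n})$ error on the distribution function, this is fatal: the target tail lives at level $F_{d'}(t_n)\asymp 1/n$, while condition~K only gives $1/\sqrt{k_n}=o\bigl((\ln n)^{-2}\bigr)$, which is far larger. The paper never invokes a CLT approximation. Instead it builds an explicit coupling: the conditional density on the projected ball is written as a mixture of the uniform law (weight $p_n\to 1$) and a residual, and the binomially many residual draws are swapped for fresh uniforms. This yields $\delta_{i,k_n}=\delta^*_{i,k_n}+\varepsilon_{i,n}$ with $\delta^*_{i,k_n}$ obeying the exact tail bound $G_{k_n}(t)$ and $\sqrt{\ln n}\,\max_i|\varepsilon_{i,n}|\to 0$ a.s. An additive $o(1)$ error in the \emph{statistic} then becomes the multiplicative $(1+o(1))$ you want via the $e^{-t/2}$ asymptotics of $F_{d'}$. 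So: right skeleton, but you need the self-normalised inequality for the constant, the covariance-correction terms in the idealised bound, and a coupling (not Berry--Esseen) for the perturbation.
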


\begin{theorem}\label{power0}
	Let $k_n$ be a sequence fulfilling condition K.
	Assume that $X_1,\ldots,X_n$ is an i.i.d. sample drawn according to an unknown distribution $\pr_X$ which fulfills condition P.
	The test \eqref{test} with rejection zone \eqref{regzone}
	has power $1$ for $n$ large enough.
\end{theorem}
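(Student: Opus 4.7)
The plan is to exhibit, almost surely for $n$ large enough, a single sample point $X_{i^*}$ close to $\partial M$ whose individual statistic $\delta_{i^*,k_n}$ already exceeds the threshold $F^{-1}(9\alpha/(2e^3n))$; since $\Delta_{n,k_n}=\max_i\delta_{i,k_n}$, this suffices. The key numerical observation is that the threshold grows only as $\log n$ (the survival function of $\chi^2(d')$ decays exponentially, so $F^{-1}(p)=2\log(1/p)+O(\log\log(1/p))$), whereas $\delta_{i^*,k_n}$ at a boundary point will turn out to be of order $k_n$. Condition K forces $k_n/\log n\to\infty$ and therefore provides exactly the separation we need.

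To produce the anchor point $X_{i^*}$, pick any $p\in\partial M$. Under P, $f\ge f_0>0$ on $M$ and $\partial M$ is $\mathcal{C}^2$; under K, the typical $k_n$-nearest-neighbour radius scales as $(k_n/n)^{1/d'}\to 0$. Covering a small tubular neighbourhood of $p$ by balls of radius $\ll (k_n/n)^{1/d'}$ and applying a Borel--Cantelli argument, one obtains, almost surely for every $n$ large enough, an index $i^*$ with $d(X_{i^*},\partial M)=o(r_{i^*,k_n})$. At the scale of its own $k_n$-ball, $X_{i^*}$ then sits essentially on $\partial M$.

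For the local geometry, because $M$ and $\partial M$ are $\mathcal{C}^2$, the set $\mathcal{B}(X_{i^*},r_{i^*,k_n})\cap M$ agrees with a half-ball in the affine tangent space $X_{i^*}+T_{X_{i^*}}M$ up to an $O(r_{i^*,k_n}^2)$ perturbation, the flat face being orthogonal to the inward conormal of $\partial M$. Perturbation results for local PCA applied to this asymmetric point cloud give that $Q_{i^*,k_n}$ is close to $T_{X_{i^*}}M$, so the rescaled projected neighbours $\{X^*_{j(i^*)}/r_{i^*,k_n}\}_{j=1}^{k_n}$ approximate an i.i.d.\ uniform sample on a half-unit-ball of $\mathbb{R}^{d'}$ (the Lipschitz density contributes only a vanishing bias). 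The centroid of such a half-ball has a fixed positive norm $c_{d'}>0$, hence a concentration argument with error $O(k_n^{-1/2})$ yields $\|\overline{X}_{k_n,i^*}\|\ge (c_{d'}/2)\,r_{i^*,k_n}$ and then $\delta_{i^*,k_n}\ge (d'+2)(c_{d'}/2)^2 k_n$ for $n$ large enough.

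Combining this lower bound of order $k_n$ with the $O(\log n)$ bound on the threshold and the condition $k_n/(\log n)^4\to\infty$, we obtain $\Delta_{n,k_n}\ge\delta_{i^*,k_n}\ge F^{-1}(9\alpha/(2e^3n))$ almost surely for every $n$ large, which is the claim. The main obstacle is the local PCA step: one must quantify how fast $Q_{i^*,k_n}$ approaches $T_{X_{i^*}}M$ at an asymmetrically sampled near-boundary point (the half-ball geometry could in principle tilt the leading eigenvectors), and confirm that this convergence is fast enough that the projection onto $Q_{i^*,k_n}$ does not kill the inward mean carried by the raw vectors $X_{j(i^*)}-X_{i^*}$. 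The uniform estimates on $r_n$ coming from condition K together with the local PCA estimates presumably developed for Theorems~\ref{level} and \ref{asymptotcdf} should supply the necessary quantitative control.
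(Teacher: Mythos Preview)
Your outline is correct and mirrors the paper's own argument almost step for step: the paper also locates a sample point $X_{i_0}$ within $O(\ln n/n)$ of $\partial M$ via Borel--Cantelli, shows that the half-ball geometry forces $\delta_{i_0,k_n}/k_n\to(d'+2)\alpha_{d'}^2$ almost surely (their $\alpha_{d'}$ is your $c_{d'}$), and then compares with $t_n(\alpha)=O(\ln n)$ using $k_n\gg(\ln n)^4$. Regarding your stated obstacle, the local PCA is less delicate than you fear: $\hat{S}_{i,k_n}$ is an \emph{uncentered} second-moment matrix, and the uncentered second moment of the uniform law on a half unit ball in $T_xM$ is still $(d'+2)^{-1}I_{d'}$ (by symmetry $x_j\mapsto -x_j$ in all tangent directions, including the conormal one, since $x_j^2$ is even), so the eigengap argument from the no-boundary case carries over essentially unchanged --- which is exactly what the paper asserts, handling that step only by reference to the earlier PCA lemma.
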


\begin{theorem}\label{consistentrule}
Let $k_n$ be a sequence fulfilling condition K.
 Assume that $X_1,\ldots,X_n$ is an i.i.d. sample drawn according to an unknown distribution $\pr_X$ which fulfills condition P.   For all $\lambda>6$,
the decision rule
$\partial M=\emptyset$ if, and only if, $\Delta_{n,k_n}\leq \lambda \ln n$ 
is consistent for $n$ large enough.
\end{theorem}

\subsection{Discussion of the hypotheses}

 The two main hypotheses in this paper consist in the smoothness of the support and the 
continuity of the density. These two hypotheses can not be weakened and we now exhibit examples of manifolds without boundary for which our test fails, the first one
being not smooth enough and the second one with a discontinuous density.

Suppose that $d=2$, $d'=1$, $X$ is uniformly drawn  on $M$ that has no boundary,  but there exists a corner 
at the origin with an angle $\alpha$ (see Figure \ref{mns}).
Introduce $S=\frac{1}{r}\mathbb{E} Y Y'$ where $Y=X|\{\|X\|\leq r\}$. Then a short calculation gives $$S=\frac{\cos^2(\alpha/2)}{3}
\begin{pmatrix}1 & 0 \\
0 & \tan(\alpha/2)^2.
\end{pmatrix}$$
\begin{itemize}
 \item If $\alpha>\pi/2$, the projection direction is ``the vertical one'', that can be considered as a 
 ``correct tangent space''. The only problem is that we should
 rescale by $\|X^*_i-X^*_{k_n(i)}\|$ instead of $r_{i,k_n}=\|X^*_i-X^*_{k_n(i)}\|$.
 \item If $\alpha<\pi/2$, the projection direction is ``the horizontal one'', 
 this fails in recognizing the tangent space, and induces a barycentre moving as in the boundary case and the test will decide falsely that there is a boundary.
\end{itemize}

\begin{figure}[!h]
	\centering
\includegraphics[scale=0.15]{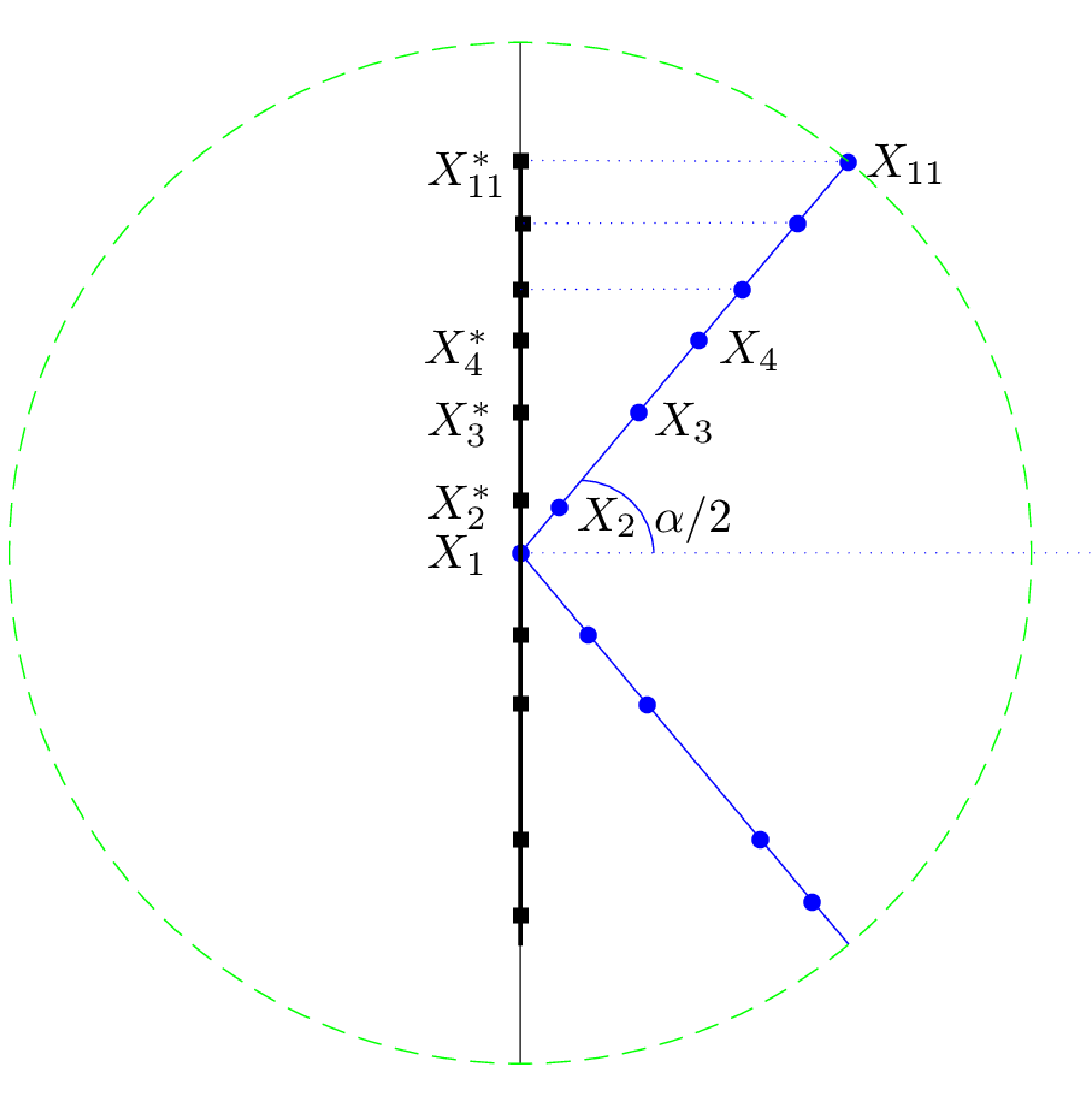}
\includegraphics[scale=0.15]{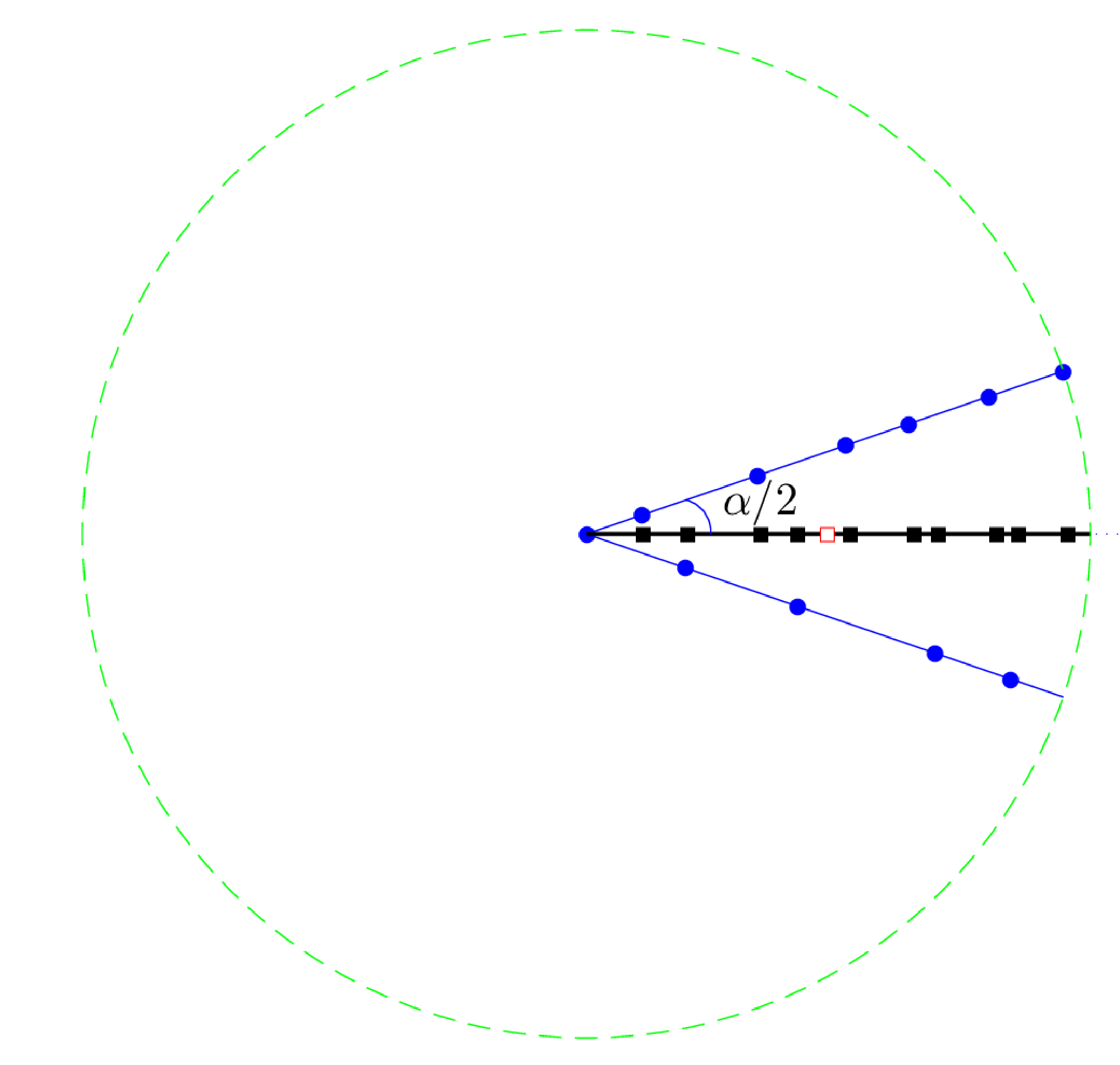}
\caption{Behaviour when there is an angle at $X_1$. 
Blue: manifold and observations, black : estimated tangent space and projections .
Red: mean of the projections, dashed green: the sphere of radius $\|X_1-X_{11}\|$ ,centred at $X_1$.
Left when $\alpha>\pi/2$, the tangent space is ``correct'' but not the normalization radius.
Right, when $\alpha<\pi/2$, the tangent space is not at all the expected one.}
\label{mns}
\end{figure}	

The continuity of the density is also necessary: if this is not the case, we may reject $H_0$ for 
any support, with or without boundary. In order to see this, consider the circular support $M=\{(x,y)\in \mathbb{R}^2: x^2+y^2=1\}$ with a ``density'' $1/(4\pi)$ when $x\leq 0$ and
$3/(4\pi)$ when $x>0$. In this case it can be proved that 
$\Delta_{n,k_n}/k_n\rightarrow 1/2$ (considering points located near the discontinuity points), which also corresponds to a ``boundary-type'' behavior.

The other hypotheses can be weakened by pre-processing the data. For instance, the intrinsic dimension can be estimated by several existing methods 
(see \cite{dimreview} for a review). Observe that this is costless in terms of sample size dependency.
Even more, there are minimax bounds for dimension estimation (see \cite{kim:17}).

With our approach the assumption that there is no noise, i.e. that the dimension of the support is lower than the dimension of the ambient space, 
can not be replaced by a noisy model in which  the support is ``around'' a lower dimensional
manifold. However, in such a case, performing a preliminary manifold estimation before 
running our test (see for instance \cite{manest} or \cite{nous:17}) can be used to overcome this problem. Even if the manifold estimator 
is not a $d'$-dimensional manifold, we may expect that by imposing stronger conditions on the sequence $k_n$, our approach can work.

Even if, due to \cite{nocomp3}, \cite{nocomp1} and \cite{nocomp2} we can avoid assuming the compactness of the support for some geometrical inference problem 
we are not sure that it is possible for the boundary detection case.

Lastly, the $\mathcal{C}^2$ smoothness of the whole boundary is not necessary, the existence of a compact $\mathcal{C}^2$ subset 
of $\partial M$ is enough. When the manifold has a boundary, the hypothesis $f(x)>0$ on $M$ can also be weakened to the usual condition
$f(x)\geq a d(x,\partial S)^b$ (for some positive constants $a$ and $b$), which change only the convergence rates.

\subsection{Numerical simulations and $k_n$ calibration}\label{secsim}

In this section we are going to explain intuitively the underlying idea regarding the parameter $k_n$. We think that,
at least asymptotically,  the  ``optimal'' choice of $k_n$ should only depend on $d'$. Other parameters, such as density variations, 
or the curvature of the manifold, should slow down the convergence rate. 
 That is,  we believe that the quality of $p-$value estimation asymptotically behaves like  $C_{f,M,d}g(n,d',k_n')$.
Intuitively, we have that
\begin{enumerate}
 \item Under $H_0$:
 \begin{enumerate} 
 \item[a.]  if we let $U_1,\ldots, U_k$ be an uniform random sample on the $d'$-dimensional unit ball, $\overline{U}_{k_n}= (1/k_n)\sum_{i=1}^{k_n} U_i$ and
 $\delta^U_k=(d'+2)k_n\|\overline{U}_{k_n}\|^2$.  Then $k_n$ should be large enough to ensure that $\delta_{k_n}^U$ is ``close enough'', in law, to a $\chi^2(d')$ distribution.
 \item[b.] On the other hand, $k_n$ should be small enough so that, locally, the nearest neighbors to every sample point behave like an uniform sample on a $d'$-dimensional ball.
 \end{enumerate}
As can be seen in Figure \ref{choosek2} and Table \ref{choosek}, $k_n\geq 10$ is sufficient to guarantee $1$ $a$. Regarding $1$ $b$, the greater the curvature of $M$, or the more variations in the density, the smaller the $k_n$ should be (see Figure \ref{choosek2}). When $n$ is large enough, this still provides a large interval  of acceptable values for $k_n$.
 \item Under $H_1$:
  \begin{enumerate} 
 \item[a.] $k_n$ should be large enough to ensure the existence of an observation $X_{i_0}$ such that its $k_n$ nearest neighbors ``look'' like an uniform sample on a half ball. More precisely, $k_n$ should be large enough to guarantee that $r_{i_0,k_n}\gg d(X_{i_0},\partial M)$.
 \item[b.] On the contrary, $k_n$ should be small enough so that, locally, the nearest neighbors ``look'' like an uniform sample on a subsets of the $d'$-dimensional ball.
 \end{enumerate}
\end{enumerate}

 Part $2$ $b$ is analogous to  part $1$ $b$ and does not add more constraints on $k_n$. Considering $2$ $a$, the (only) important parameter is the  ($d'-1$) measure of the boundary. The smaller this measure is, the larger $k_n$ should be. Conversely, if the measure of the boundary is large, we will have more observations close to it, so the condition $r_{i,k_n}\gg d(X_{i},\partial M)$ will be fulfilled.
Due to the well known curse of dimensionality, for small values of $n$ and for high dimensions, we have more observations located close to the boundary,
which has the following unexpected effect:  $k_n$ decreases with the dimension.

All this is illustrated in two simulation studies, first for $S_{d'}=\{x\in\mathbb{R}^{d'+1},\|x\|=1\}$ the $d'$-dimensional sphere and 
$S_{d'}^+=\{x=(x_1,\ldots,x_{d'+1}),\|x\|=1,x_1\geq 0\}$ the $d'$-dimensional half sphere. 
Consider the test with a level $\alpha=5\%$.
For a given $d'\in\{1,2,3,4,5\}$ and a given $n\in \{100, 200, 500, 1000, 2000\}$ we estimate 
$e_0(k)=\pr_{H_0}(\Delta_{n,k}\geq F_{d'}^{-1}(9\alpha/(2e^3n)))$ as
the percentage of wrong decisions for samples of size $n$, uniformly drawn on $S_{d'}$ and $e_1(k)=\pr_{H_1}(\Delta_{n,k}\leq F_{d'}^{-1}(9\alpha/(2e^3n)))$
as the percentage of wrong decisions for samples of size $n$, uniformly drawn on $S_{d'}^+$. Each time the percentages are estimated with $200$ repetitions of the experiment. The results are presented in Figure \ref{choosek2}.
For $d'\in\{1,2,3\}$ we observe that $e_0$ can be neglected (for $k\in[10,60]$) 
when $n\geq N_{d'}$ (with $N_1=200$, $N_2=500$ and $N_3=1000$). We propose the following criteria to choose $k_n$.

\begin{enumerate}
	\item  If $\{k \text{ such that } e_0(k)+e_1(k)\leq 0.01\}\neq \emptyset$ then  $k_n=\min \{k \text{ such that } e_0(k)+e_1(k)\leq 0.01\}$  
	\item If $\{k \text{ such that } e_0(k)+e_1(k)\leq 0.01\}= \emptyset$ then choose  $k_n=\argmin_k(e_0(k)+e_1(k))$
\end{enumerate} 
The values of $k_n$ are given in Table \ref{choosek}. They are also presented in Figure \ref{choosek2}.

\begin{figure}[h!]
\includegraphics[scale=0.37]{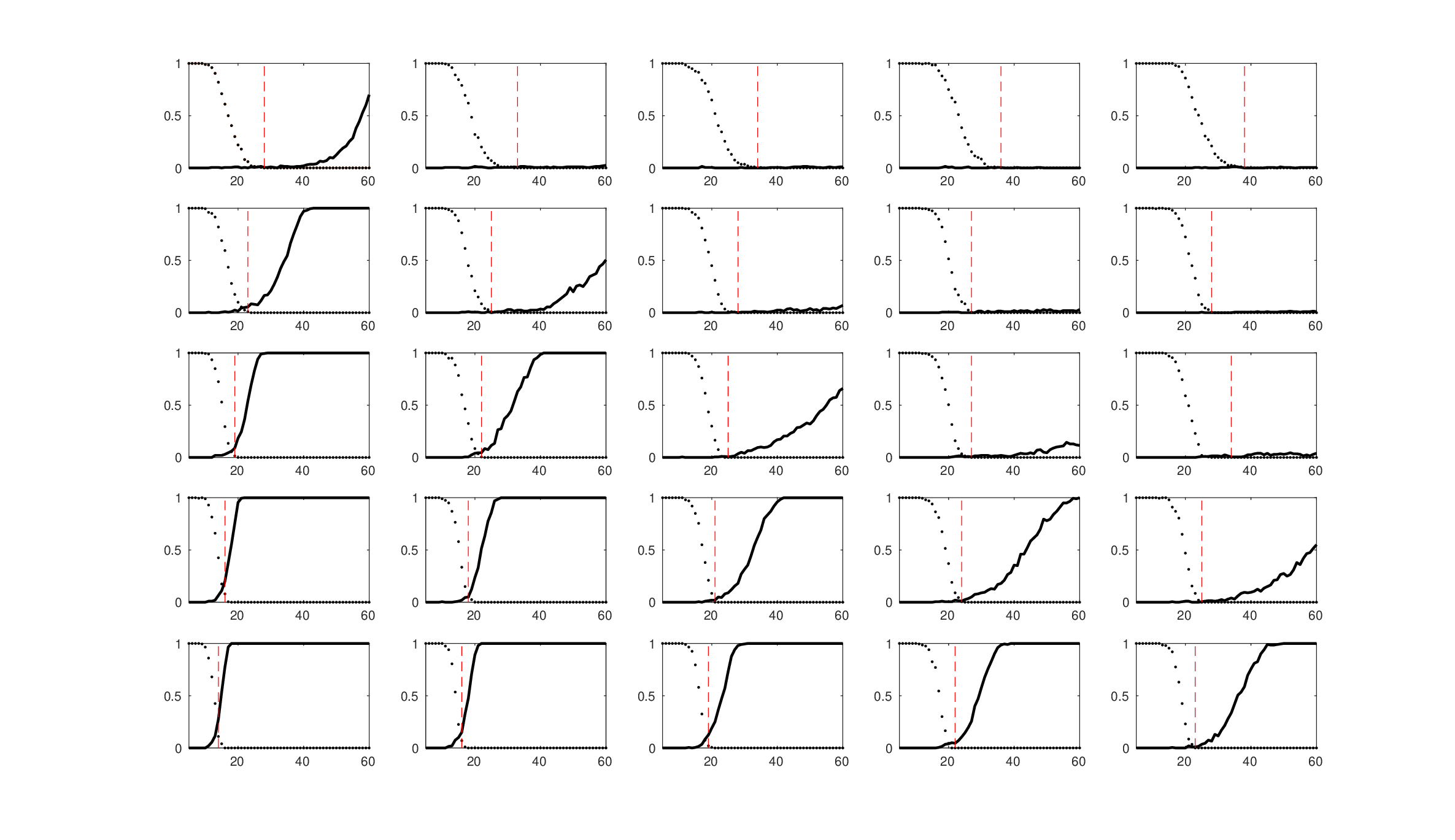} 
\caption{ $e_0$ (dashed) and $e_1$ (plain) for different values of $n$ and $d'$ (from left to right, increasing values of $n$ in 
	$\{100; 200; 500; 1000; 2000\}$ and from top to bottom increasing values of $d'$ in $\{1;2;3;4;5\}$), the chosen value for $k_n$ is 
	indicated by the vertical dashed line }
\label{choosek2}
\end{figure}

\begin{table}[!h]
	\small
\begin{tabular}{|c|c|c|c|c|c|}
 \hline
        & $n=100$ & $n=200$ & $n=500$ & $n=1000$ & $n=2000$\\
 \hline
 $d'=1$ &$30$	&$30$	&$40$	&$40$	&$40$	\\
 \hline
  $d'=2$ &$24$	&$26$	&$28$	&$28$	&$28$	\\
  \hline
 $d'=2$ &$20$	&$24$	&$26$	&$26$	&$26$	\\
 \hline
  $d'=4$ &$18$	&$22$	&$22$	&$24$	&$26$	\\ 
   \hline
 $d'=5$ &$18$	&$18$	&$20$	&$22$	&$24$	\\
 \hline 
\end{tabular}
 \caption{Proposed values for $k_n$}
\label{choosek}
\end{table}

We also considered the trefoil knot, a torus, a spire and a Moebius ring. The percentage of times (over $50000$ replicates for each manifold and sample size) where $H_0$ is rejected is shown in Table \ref{tableresH0} when there is no boundary. In Table \ref{tableresH1} it is shown the percentage of times (over $50000$ replicates) where $H_0$ is accepted when there is a boundary.
 As can be seen, the test almost never fails under $H_1$, which is not surprising considering the way we chose the sequence $k_n$.
 Under $H_0$ the convergence to an error rate inferior to $5\%$ depends on the dimension $d'$ and the curvature of the manifold.

 \begin{table}[!h]
\small
  \begin{tabular}{|c|c|c|c|c|c|}
\hline
      &$n=100$		&$n=200$	&$n=500$	&$n=10^3$ 	& $n=2000$  \\
\hline
$S_1$ &  $0.96\%$  	&$0.53\%$	& $0.37\%$ 	& $0.41\%$	& $0.33\%$  \\
\hline
$S_2$& $4.01\%$    	&$1.39\%$	& $0.71\%$	& $0.38\%$ 	&$0.29\%$\\
\hline
$S_3$& $12.09\%$    	&$4.81\%$ 	& $1.63\%$	& $0.9\%$	&$0.95\%$\\
\hline
$S_4$ & $20.93\%$ 	&$7.8\%$ 	& $3.08\%$	& $2.06\%$	&$1.06\%$  \\
\hline
Trefoil&  $100\%$   	& $99.93\%$	& $12.87\%$	& $2.05\%$ 	&$0\%$	\\
\hline
Torus &  $100\%$   	& $99.61\%$	& $27.46\%$	& $4.69\%$ 	&$0\%$	\\
\hline
\end{tabular}
\normalsize
\caption{For different samples,  the $\%$ of times where $H_0$ is rejected when there is no boundary.}
\label{tableresH0}	
\end{table}

 \begin{table}[!h]
\small
  \begin{tabular}{|c|c|c|c|c|c|}
\hline
      &$n=100$		&$n=200$	&$n=500$	&$n=10^3$ 	& $n=2000$  \\
\hline
$S_1^+$ &    $0\%$  	&$0\%$	 	& $0\%$ 	& $0\%$		& $0\%$  \\
\hline
$S_2^+$&   $0\%$  	&$0\%$	 	& $0\%$ 	& $0\%$		& $0\%$  \\
\hline
$S_3^+$&   $0\%$  	&$0\%$	 	& $0\%$ 	& $0\%$		& $0\%$  \\
\hline
$S_4^+$ &  $0\%$  	&$0\%$	 	& $0\%$ 	& $0\%$		& $0\%$  \\
\hline
Spire &  $0.5\%$  	&$3.5\%$	 & $1.5\%$ 	& $2\%$		& $5\%$  \\
\hline
Moebius  &  $0\%$  	&$0\%$	 	& $0\%$ 	& $0\%$		& $0\%$  \\
\hline
\end{tabular}
\normalsize
\caption{For different samples,  the $\%$ of times where $H_0$ is accepted when there is a boundary.}
\label{tableresH1}	
\end{table}

\section{Empirical detection of points close to the boundary and estimation of the number of its connected components}

A natural second step after deciding that the support has a boundary is to estimate it, or at 
least identify observations ``close'' to it. To get an insight into the topological 
properties of the boundary, a third step could be to estimate the number of its connected components.
In this section we will tackle  empirically both problems.

\subsection{Detection of ``boundary observations''}
Theorem \ref{level} suggests  selecting 
$\{X_i: \delta_{i,k_n}\geq F_{d'}^{-1}(9\alpha/(2ne^3))\}$ as ``boundary observations''.
However,   when applying this method with the previously proposed values for $k_n$,
it identifies ``too few'' boundary observations for $d'=2$. We think that this is due to the $2e^3/9$ factor, 
which deals with the problem of the maximum of dependant variables but, for a given observation, underestimates probability to be close to the boundary.
Allowing ``large'' values for $\alpha$ is not sufficient to overcome this problem, as it can be observed 
in Figure \ref{kbound} where $\alpha=20\%$ is considered.
 For this reason we will adapt, using tangent spaces, 
the method given in \cite{nous:17} to detect ``boundary balls''.
 \begin{figure}[!h]
	\centering
\begin{tabular}{c c c c}
 $S_1^+$ & $S_{2}^+$ &  spiral & Marius \\
\includegraphics[scale=0.2]{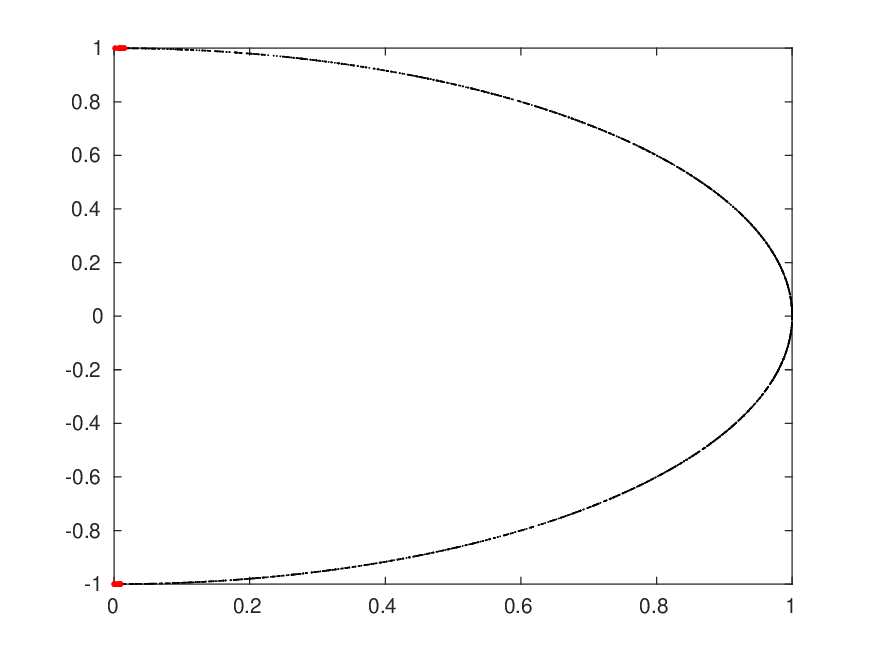}& \includegraphics[scale=0.2]{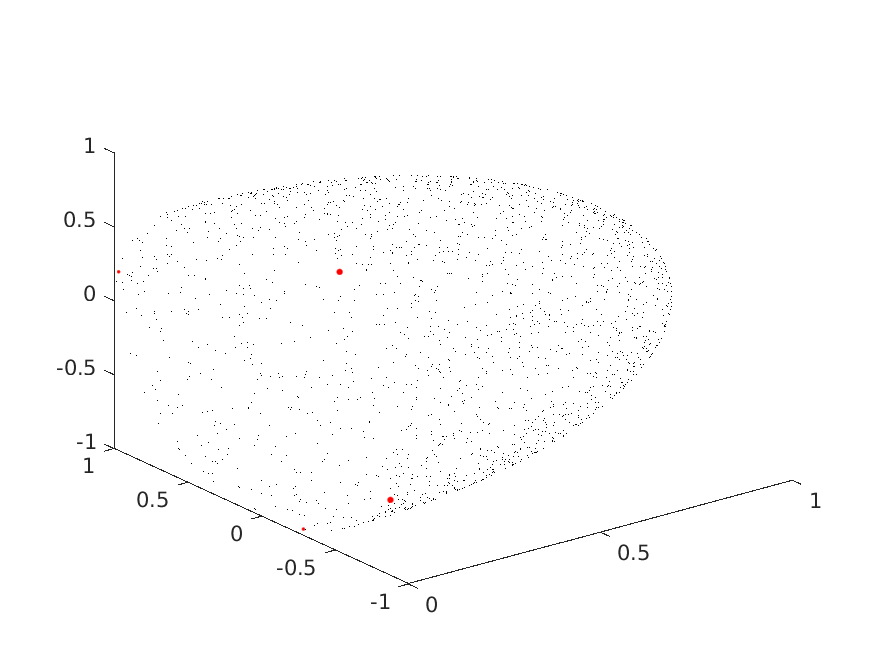} &\includegraphics[scale=0.2]{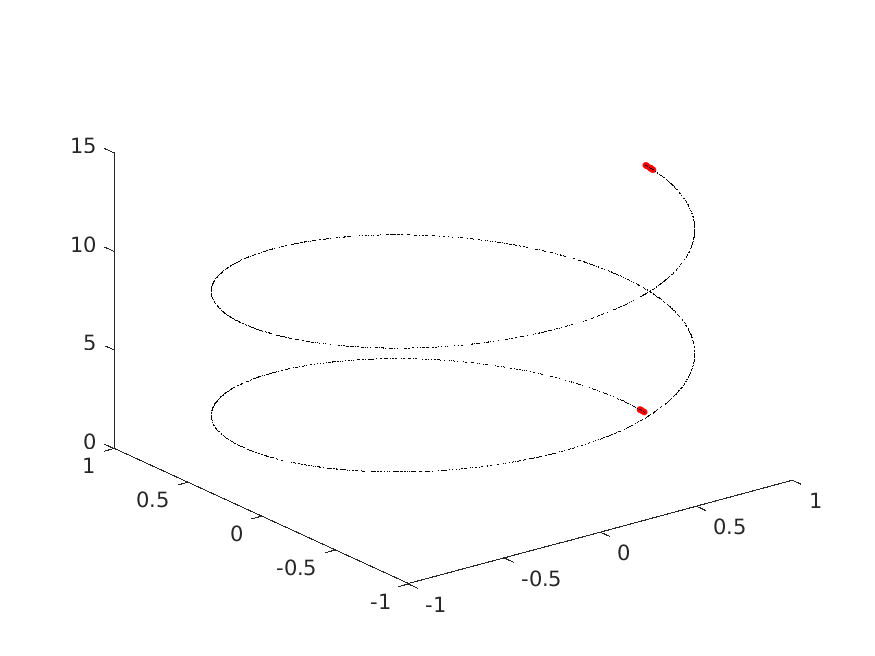} & \includegraphics[scale=0.2]{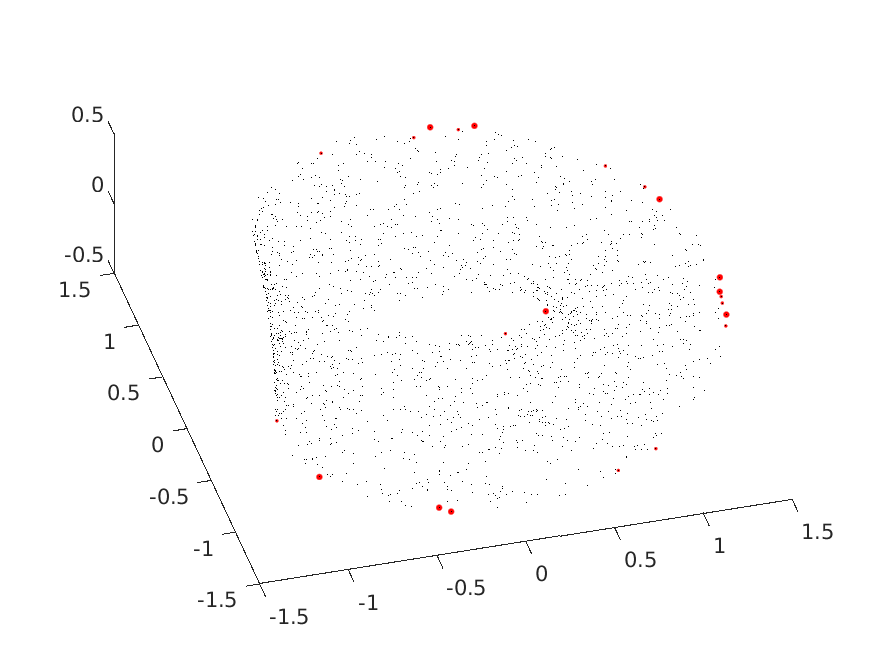}\\
\end{tabular}
	\caption{Some examples for support with boundary, 
the associated sample ($n=2000$) is in black, and points that are
identified as ``close to the boundary'' are in red, the size of the points depending of the associated $\alpha$, 
the boundary identification starts with $\alpha=20\%$ (small red points), and finish  with $\alpha=5\%$ (larger red points)
}%
\label{kbound}
\end{figure}	

In \cite{nous:17}, $M$ is $d$-dimensional and boundary observations are identified as those with large Voronoi cells 
(recall that $\Vor(X_i)=\{x:\  \|x-X_i\|\leq \|x-X_j\|\  \forall j\ \}$). More precisely, define $\rho_i=\sup\{\|x-X_i\|:x\in \Vor(X_i)\}$.
Then boundary observations are those $X_i$ such that $\rho_i\geq \eps_n$, where $\eps_n$ is a smoothing parameter. 
Two different ideas inspired this characterization.
The first one was to consider the Devroye--Wise  estimator of the support $\hat{S}_{\eps_n}=\bigcup_i \mathcal{B}(X_i,\eps_n)$ 
(see \cite{cheva:76} or \cite{dw:80}), in which case it is quite intuitive that sample points $X_i$ fulfilling $\mathcal{B}(X_i,\eps_n)\cap \partial \hat{S}\neq \emptyset$ are close to the boundary. The second one was to look for observations in $\partial C_{\eps_n}$, the $\eps_n$-convex hull of the sample (see 
\cite{cas07}).  These two approaches are in fact the same,  the boundary observations can be easily identified considering the
size of the Voronoi cells (see Figure \ref{graphe} left side).
This  can be  explained as follows. Choose $\eps_n>d_H(\{X_1,\ldots,X_n\},M)$, where $d_H$ denotes the Hausdorff distance,
suppose that there exists $x\in \Vor(X_i)$ with $\|x-X_i\|>\eps_n$, then $x\notin M$.  Using the fact that $X_i\in M$, it follows that there exists $t\in[X_i,x]\cap \partial M$ 
(because $M$ is $d$-dimensional) and then $d(X_i,\partial M)\leq \eps_n$ (when $\partial M$ is smooth enough we have an even better inequality).

When $M$ has dimension $d'<d$, every observation has a large Voronoi cell (this can be observed considering directions normal to $M$, see Figure \ref{graphe} right side).
Then the previously suggested method requires a small adjustment, naturally  done using projections on the tangent space, which can be estimated via local PCA.
The idea being to locally lie in the full dimensional case. More precisely, recalling that $Q_{i,k_n}$ denotes estimation via local PCA of the tangent
space at $X_i$, the tangential boundary observations are defined as follows.

\begin{definition}
 $X_i$ is a $(k_n,\eps_n)$-tangential boundary observation if 
  $$\rho_i\equiv \sup\{\|x\|: x\in Q_{i,k_n} \text{ and } \|x\|\leq \|x-X^*_{j(i)}\|,\  \forall \ 1\leq j\leq k_n \}\geq \eps_n.$$ 
 \end{definition}
As in \cite{nous:17}, we suggest choosing $\eps_n=2\max_i \min_j \|X_i-X_j\|$.

\begin{figure}[!h]
	\centering
\includegraphics[scale=0.3]{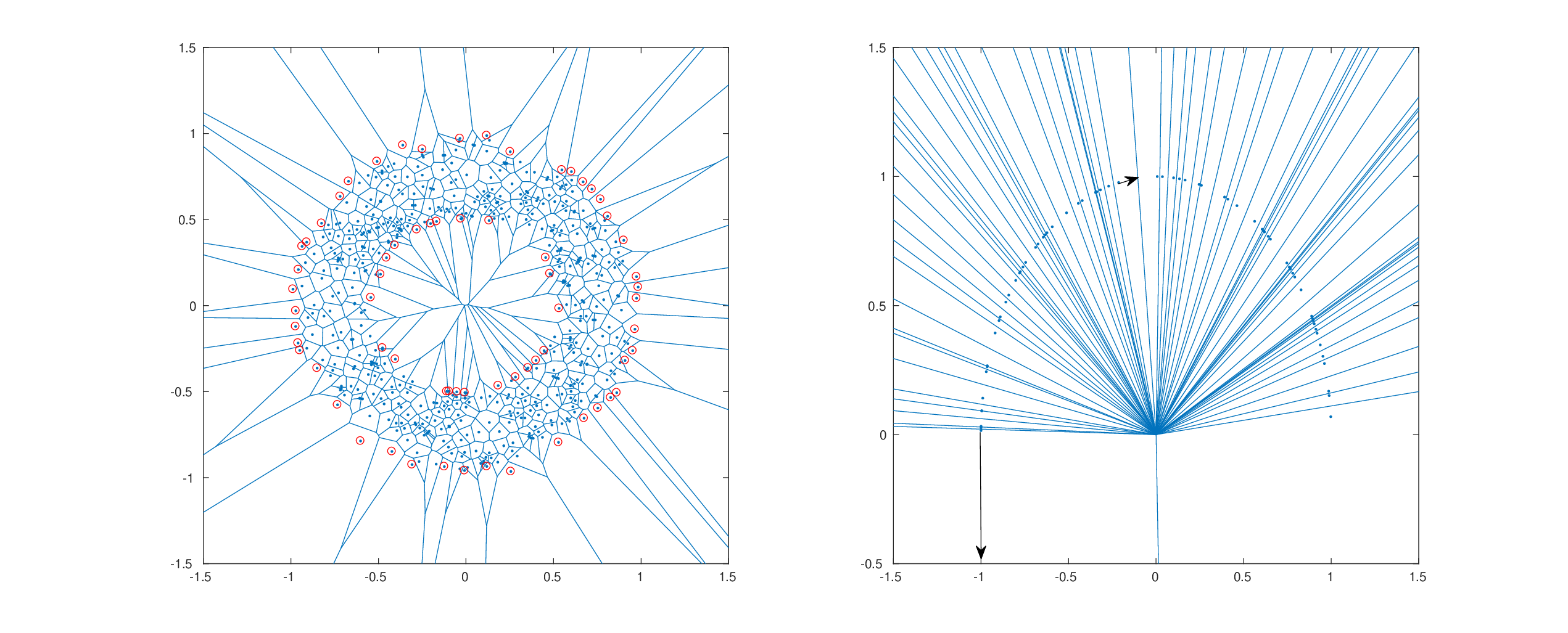}
\caption{Left side, $d=d'=2$, $500$ points drawn on $M=\mathcal{B}(0,1)\setminus\mathcal{B}(0,0.5)$, observations 
and Voronoi cells are presented. Observations with an associated radius larger than $0.3$ 
are highlighted. Right side, $d=2$, $d'=1$, $70$ points uniformly drawn on a half circle, all the Voronoi cells are large, 
but considering the tangential direction (highlighted by arrows at two points)
helps to identify boundary observations.}%
\label{graphe}
\end{figure}

\subsection{Building a ``boundary graph''}

Once we have identified $\mathcal{Y}_m=\{Y_1,\ldots, Y_{m}\}$ as the set of the centers of the $(k_n,\eps_n)$-tangential boundary balls,  a natural second step is how to estimate 
$\partial M$. In this respect, we think that the tangential weighted Delaunay complex (see \cite{lev:16}) should work. To prove this is far beyond the scope of this paper.  Here, we propose, as an initial step,  an
estimator based on a graph with vertices
$\mathcal{Y}_m$, building edges between the vertices in such a way that the  resulting graph captures the 
``shape'' of the boundary. To do this, we are going to ``connect'' each $Y_i$ to those $Y_j$ such that $\|Y_i-Y_j\|\leq R_i$. As usual, the choice of $R_i$ depends on striking a balance.
On the one hand, $R_i$ should be small enough to connect a point only with its neighbors. On the other hand, $R_i$ should be large enough to 
allow capturing the global structure of $\partial M$. The idea for selecting $R_i$ is based on the following. As $\partial M$ is a $(d'-1)$-dimensional
manifold without boundary, then  for all $x\in \partial M$, for $r$ small enough,  the projection onto the space tangent to $\partial M$ at the point $x$, 
$\pi_x(\mathcal{B}(x,r)\cap \partial M)$,  should be close to $\mathcal{B}(x,r)\cap T_x \partial M$. As a plug-in version we introduce
\begin{enumerate}
 \item $\mathcal{Z}_{i,r}=\{Y_j: \|Y_j-Y_i\|\leq r\}$, the empirical neighborhood of $Y_i$,
 \item   $\hat{\pi}_{i}(\mathcal{Z}_{i,r})$ the  orthogonal  projection onto the $(d'-1)$ first axis of a PCA based on $\mathcal{Z}_{i,r}$.
 \end{enumerate}
 Naturally $\hat{\pi}_{i}(\mathcal{Z}_{i,r})$  estimates $\pi_x(\mathcal{B}(x,r)\cap \partial M)$ 
 and so should be close to a $(d'-1)$-dimensional ball centred at $Y_i$. We quantify this closeness as follows. We say that $r$ is large enough 
 for $i$ if $Y_i$ is in $\mathring{H}_i$ where $H_i$ is the convex hull of $\hat{\pi}_{i}(\mathcal{Z}_{R_i})$.

Lastly, for all $i=1,\dots,n$, choose $R_i$ as the smallest value $r$ that is large enough for $i$.
This is illustrated in Figure \ref{graphe2}.
\begin{figure}[!h]
	\centering
\includegraphics[scale=0.4]{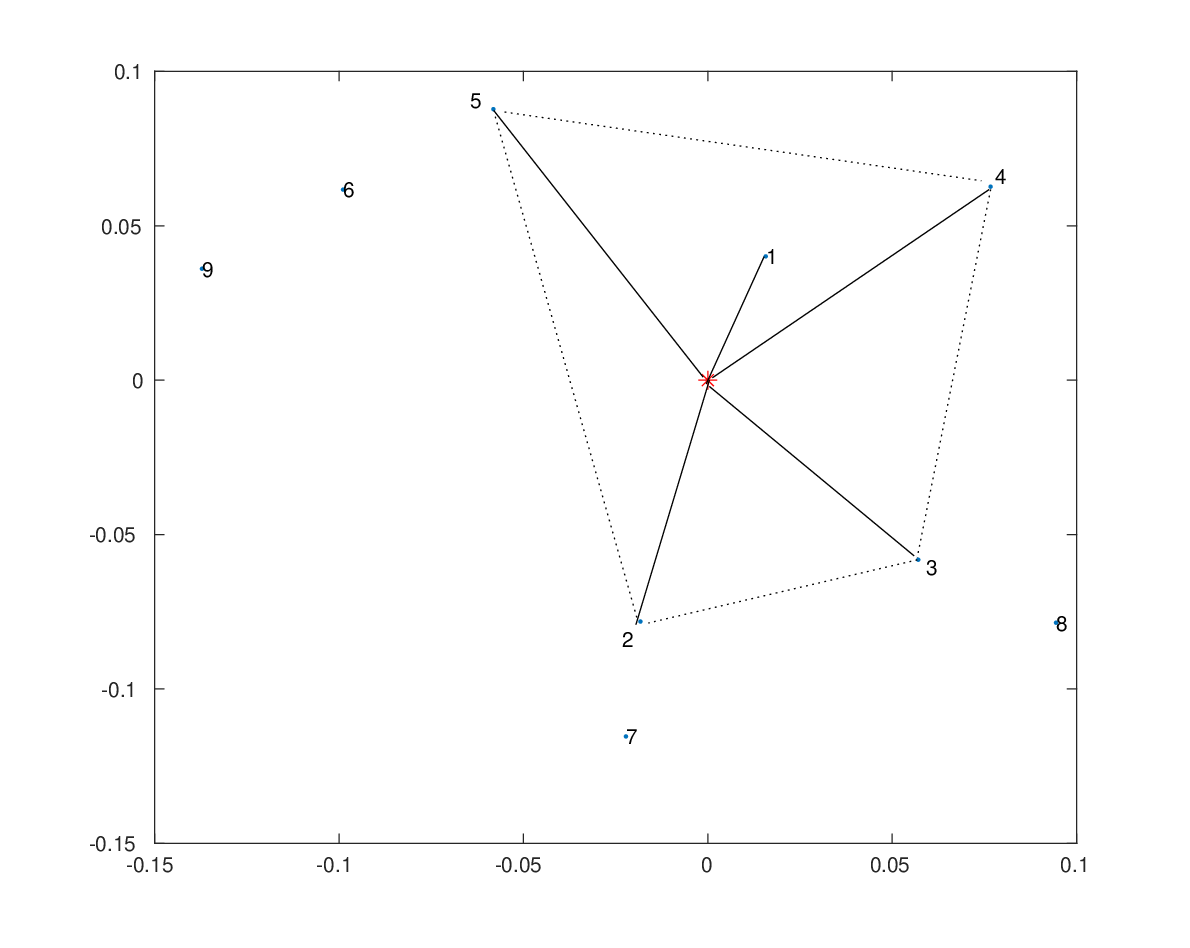}
\caption{Consider the point $(0,0)$ (the red $*$) in $\mathcal{Y}$ and its $9$ nearest neighbors. We will connect $(0,0)$ to its $5$ nearest neighbors.}%
\label{graphe2}
\end{figure}

\subsection{Some experiments}
To illustrate the procedure  introduced we consider the Moebius ring and the truncated cylinder with a hole in a cap, (see Figure \ref{fig0}).
Both are $2$-dimensional sub-manifolds of $\mathbb{R}^3$. The boundary of the first one has one connected component while the boundary of the second one has three.

As expected, in the cylinder the sample size required to have a ``coherent'' graph is higher.

\begin{figure}[!h]
	\centering
\begin{tabular}{c c c c}
 $n=500$ & $n=1000$ & $n=2000$ & $n=4000$ \\
 $k=22$ & $k=26$ & $k=30$ & $k=40$ \\
\includegraphics[scale=0.24]{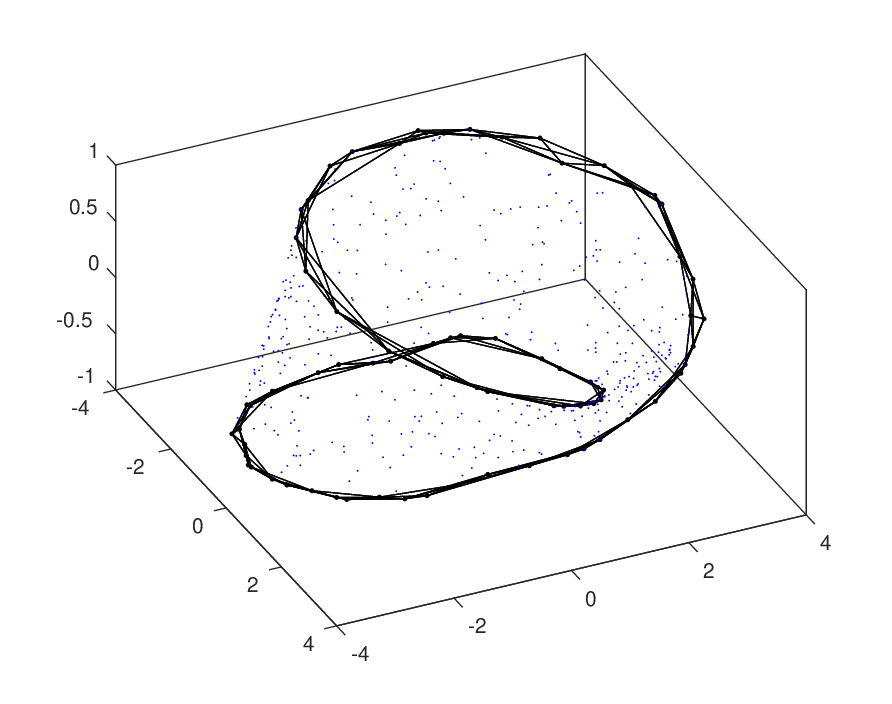}& \includegraphics[scale=0.24]{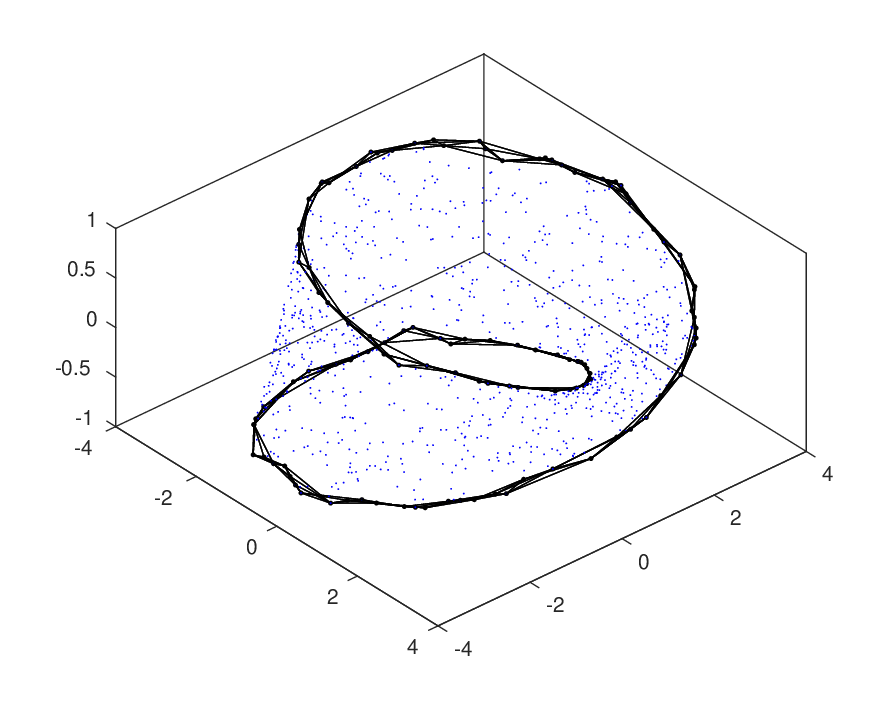} & \includegraphics[scale=0.24]{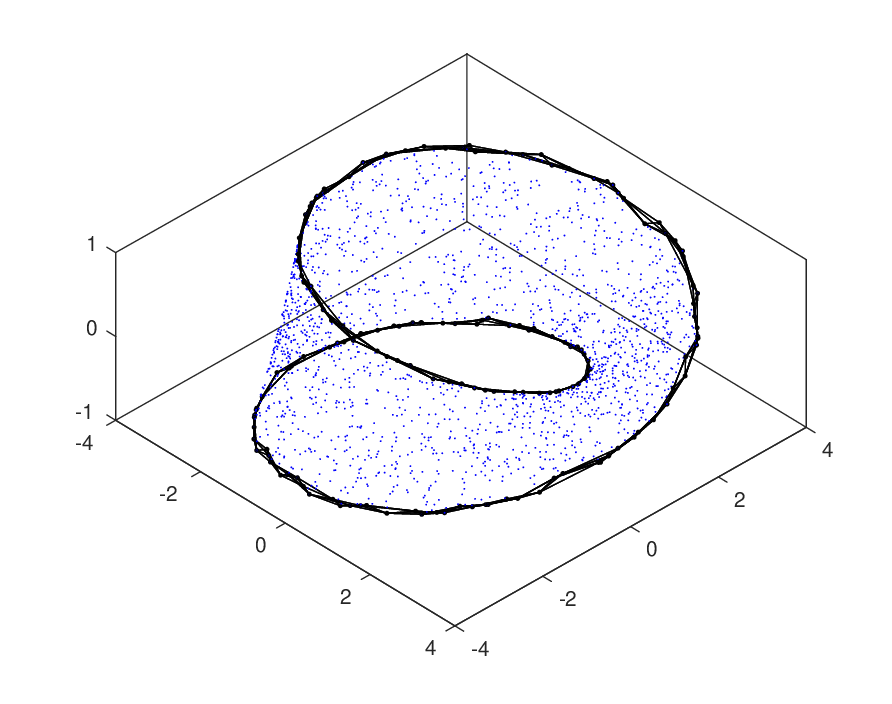} &\includegraphics[scale=0.24]{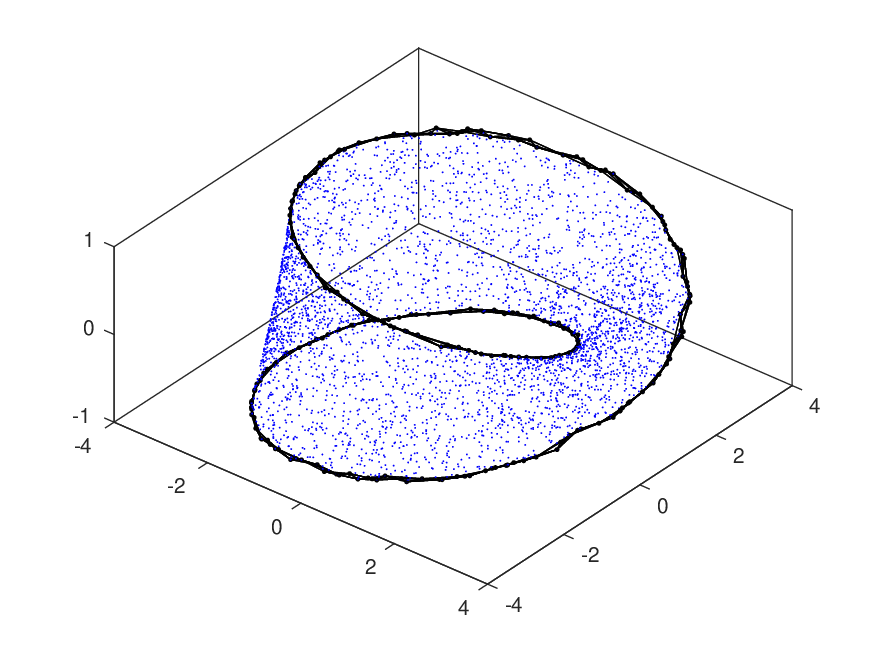} \\
\includegraphics[scale=0.24]{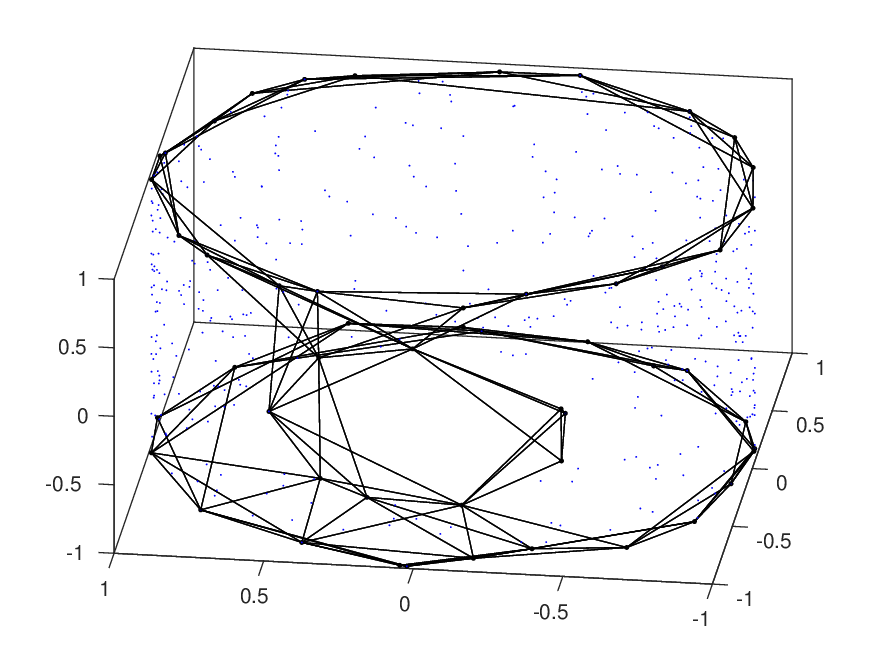}& \includegraphics[scale=0.24]{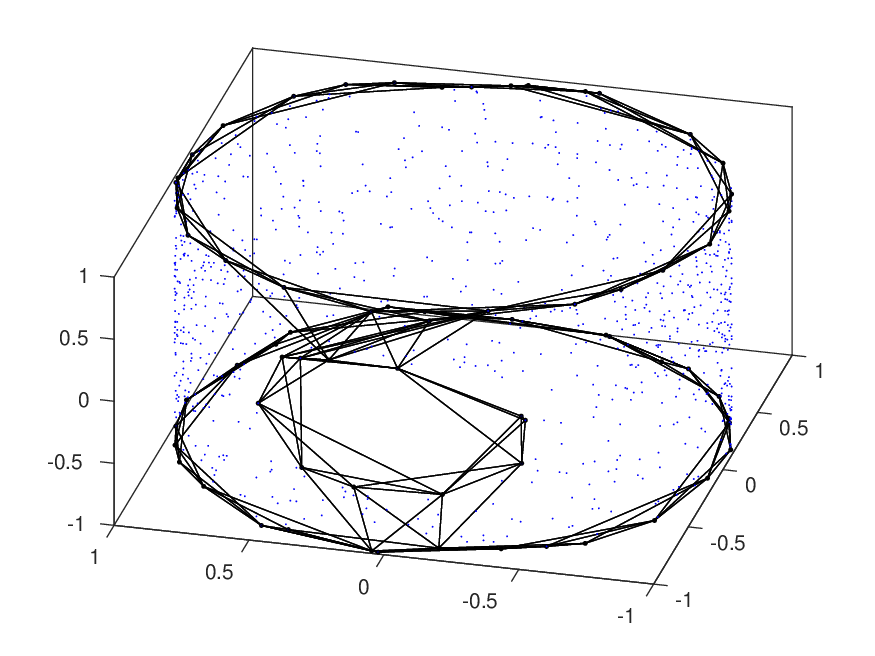} & \includegraphics[scale=0.24]{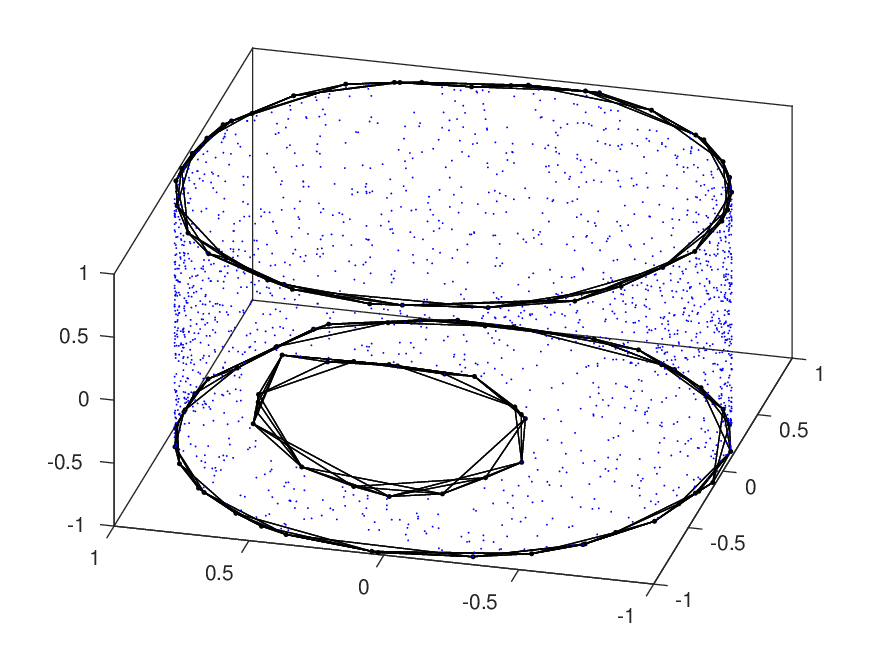} &\includegraphics[scale=0.24]{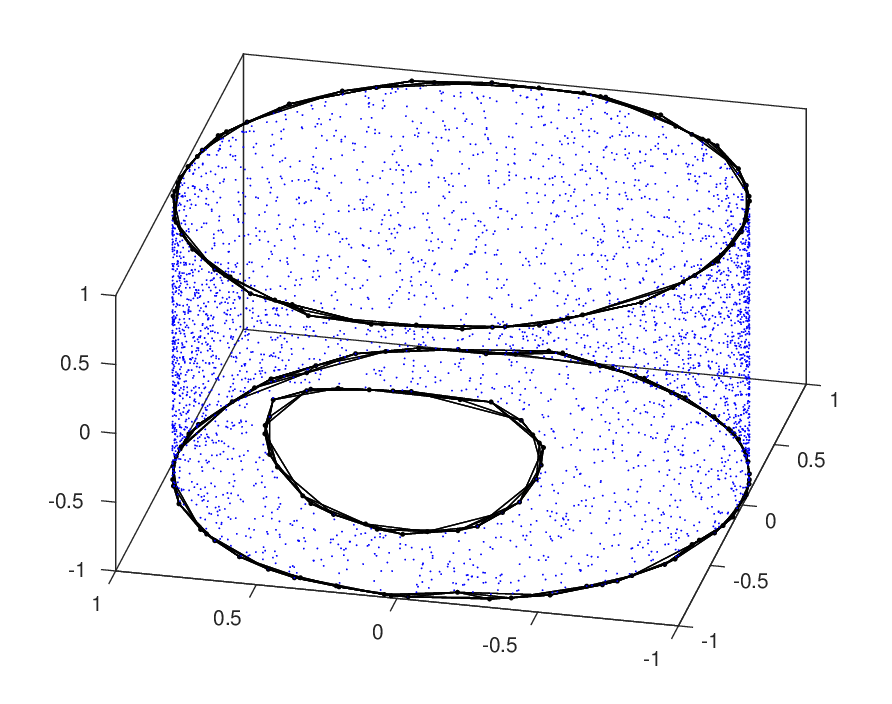} \\
\end{tabular}
\caption{Boundary ball detection and associated graph for different sample sizes. In the first row the Moebius ring and in the second the truncated cylinder with a hole in a cap. Observations are represented as blue dots while boundary centres are large black dots. The graph is represented by black lines.}%
\label{fig0}
\end{figure}

Second, we consider uniform draws of sizes $n\in\{500,1000,2000,4000,8000,16000\}$ on the $(d-1)$-dimensional half sphere
$\{x_1^2+\ldots+ x_d^2=1,x_d\geq 0\}\subset\mathbb{R}^d$ for $d=\{3,4,5\}$. Define 
$d_1=\max_{x\in \partial M}\min_i \|x-Y_i\|$ and $d_2=\max_{i}\min_{x\in \partial M} \|x-Y_i\|$.
They are estimated via a Monte Carlo method, drawing $50000$ points on $\partial M$.
For each value of $n$ and $d$, the box plot over $50$ repetitions of the $p$-values of the test and the estimations of $d_1$ and $d_2$ are shown
in Figures \ref{d3}, \ref{d4} and \ref{d5}.
\begin{figure}[!h]
	\centering

\includegraphics[scale=0.4]{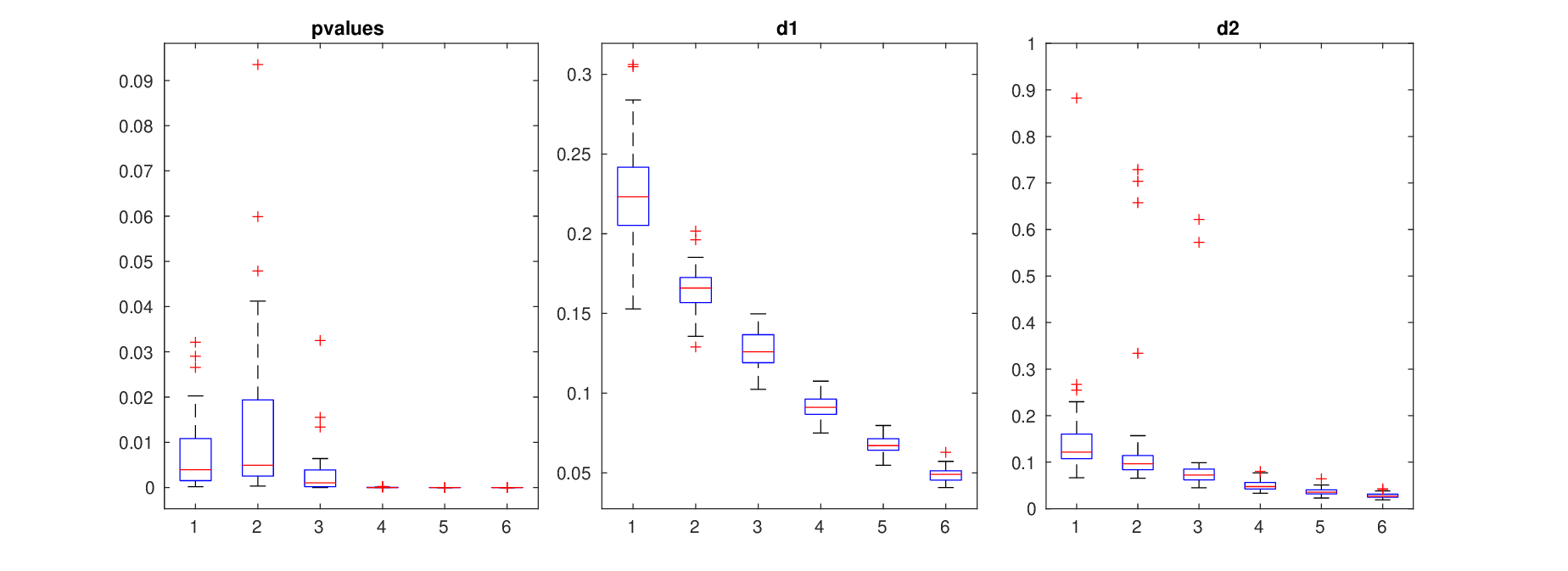}
\caption{$d=3$, on abscissa $1:(n=500,k=25)$,  $2:(n=1000,k=25)$, $3:(n=2000,k=30)$, $4:(n=4000,k=40)$,  $5:(n=8000,k=50)$, $6:(n=16000,k=50)$ }%
\label{d3}
\end{figure}

\begin{figure}[!h]
	\centering

\includegraphics[scale=0.4]{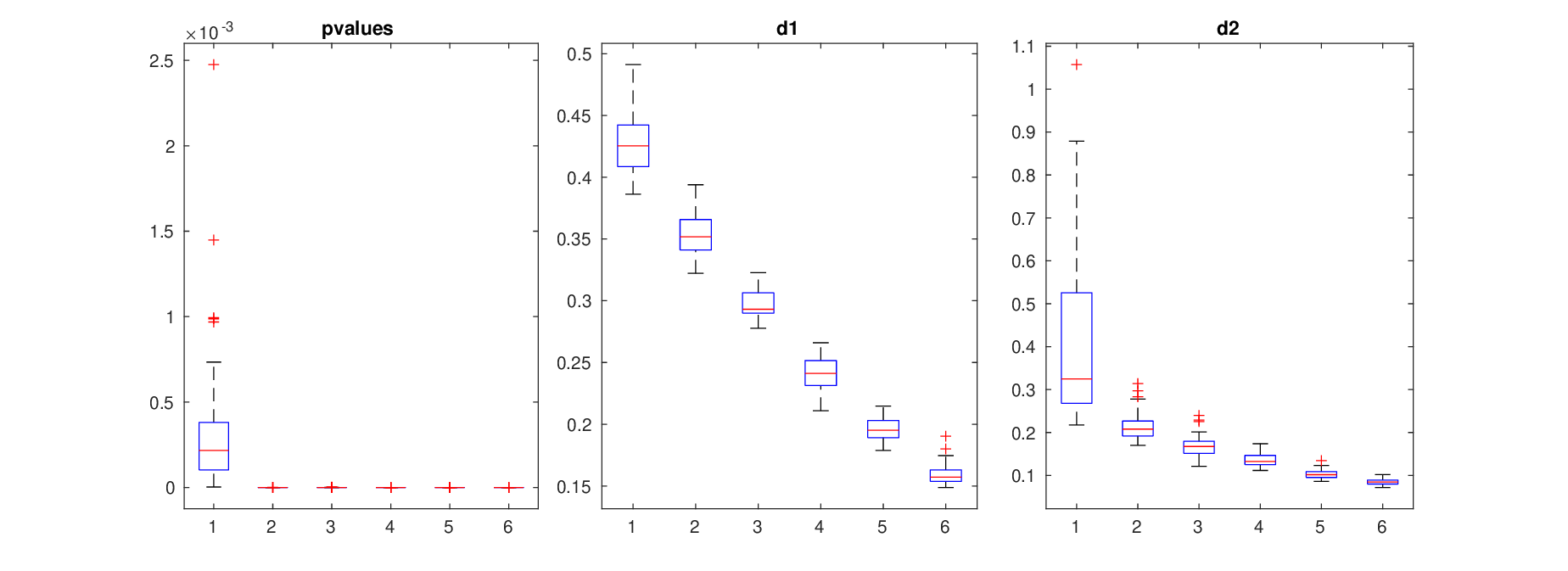}
\caption{$d=4$, on abscissa $1:(n=500,k=30)$,  $2:(n=1000,k=50)$, $3:(n=2000,k=50)$, $4:(n=4000,k=60)$,  $5:(n=8000,k=70)$, $6:(n=16000,k=70)$ }%
\label{d4}
\end{figure}

\begin{figure}[!h]
	\centering

\includegraphics[scale=0.4]{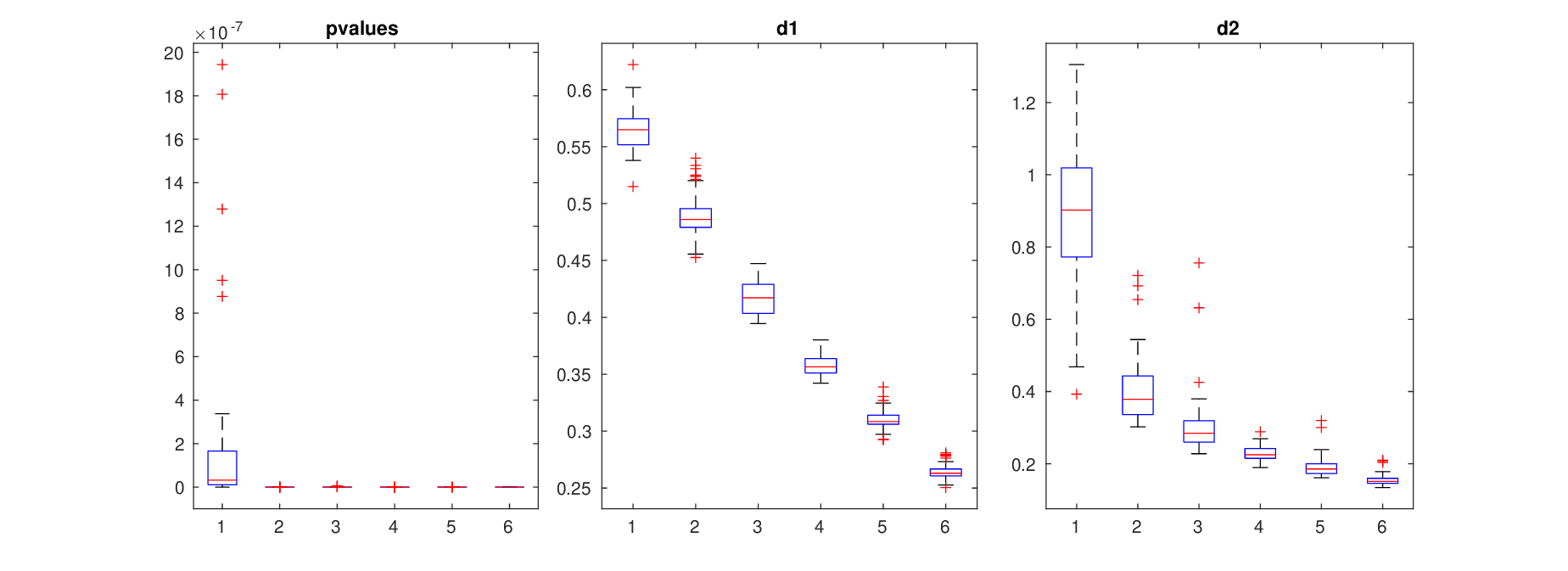}
\caption{$d=5$, on abscissa $1:(n=500,k=50)$,  $2:(n=1000,k=70)$, $3:(n=2000,k=80)$, $4:(n=4000,k=90)$,  $5:(n=8000,k=100)$, $6:(n=16000,k=100)$ }%
\label{d5}
\end{figure}

\section{Proofs}

\subsection{Proofs under $H_0$ ($\partial M=\emptyset$)}

 In this section we give the details of the proofs when $\partial M\neq \emptyset$.
 First we prove that the empirical distribution 
of the $\delta_i$ converges to a $\chi^2$ distribution, then we prove that the proposed test has, asymptotically, level $\alpha$ (which proves Theorem \ref{level}).

 For ease of writing, in what follows, $a$ denotes a general constant that may have different values and should be understood as ``there exists an uniform constant such that...''.

First we introduce $\xi_n^{\ast}\equiv (\ln(n)/n)^{1/2d'}$, $\xi_n^{\blacktriangledown}\equiv (k_n/n)^{1/d'}$, $\xi_n^{\circ}\equiv \sqrt{\ln(n)/ k_n}$,  
$\rho_n=\max(\xi_n^{\ast},\xi_n^{\blacktriangledown})$ 
and $\xi_n\equiv\max\{\xi_n^{\ast},\xi_n^{\blacktriangledown},\xi_n^{\circ}\}$. Observe that by condition K,  $(\ln n)^2 \xi_n\to 0$,  then 
\begin{enumerate}
 \item the maximum distance from an observation to its $k_n$th nearest neighbor converges (almost surely) to  $0$, i.e. $r_n\to 0$ (this is a consequence of Lemma \ref{knn});
 \item the local PCA step converges to the projection onto the tangent space (the rate, $\xi_n^{\circ}$, is given in Lemma \ref{localpcanobound}).
\end{enumerate}

For a given $i\in \{1,\ldots,n\}$, denote by $x_0\equiv X_i$, and by $x_1,\ldots, x_{k_n}$ the $k_n$-nearest neighbors of $X_i$. Recall that
$r_{i,k_n}=\max_{1\leq j\leq k_n}\|x_0-x_j\|$ (see  Definition \ref{def0}).
For all $j\in \{1,\ldots,k_n\}$, write $x_j^*$ for the local PCA projection of $x_j-x_0$, and $y_j$ 
for the (orthogonal) projection onto the tangent space $T_{x_0}M$ (at the point $x_0$) of $x_j-x_0$.

Write $\delta_i= (d'+2)k_nr_{i,k_n}^{-2}\lVert (1/k_n)\sum_{j } x_{ j }^*\rVert ^2$ and
$\delta_i^Y= (d'+2)k_nr_{i,k_n}^{-2}\lVert (1/k_n)\sum_{j} y_{j }\rVert ^2$.\\

By Lemma \ref{localpcanobound}, for all $i\in \{1,\ldots,n\}$  we have, with probability greater than $1-n^{-6}$,  

$$\delta_i=\frac{(d'+2)k_n}{r_{i,k_n}^2}\Big\|\frac{1}{k_n}\sum_j y_j + E_{i,n}\Big(\frac{1}{k_n}\sum_j y_j\Big) + \frac{1}{k_n} \sum_j e_j \Big\|^2$$

with $\|E_{i,n}\|_{\text{op}}\leq a \xi_n$ and $\|e_j\|\leq a \xi_n \|y_j\|^2$.

From where it follows that,
\begin{multline*}
 \delta_i=\delta_i^Y+\frac{(d'+2)k_n}{r_{i,k_n}^2}\Big\|E_{i,n}\Big(\frac{1}{k_n}\sum_j y_j\Big)\Big\|^2
+\frac{(d'+2)k_n}{r_{i,k_n}^2}\Big\| \frac{1}{k_n} \sum_j e_j \Big\|^2 \\
+2\frac{(d'+2)k_n}{r_{i,k_n}^2}\Big\langle \frac{1}{k_n} \sum_j y_j,E_{i,n}\Big(\frac{1}{k_n}\sum_j y_j\Big) \Big\rangle 
+2\frac{(d'+2)k_n}{r_{i,k_n}^2}\Big\langle \frac{1}{k_n} \sum_j y_j,\frac{1}{k_n}\sum_j e_j \Big\rangle \\
+2\frac{(d'+2)k_n}{r_{i,k_n}^2}\Big\langle \frac{1}{k_n}\sum_j e_j,E_{i,n}\Big(\frac{1}{k_n}\sum_j y_j\Big) \Big\rangle.
\end{multline*}

So, with probability greater than $1-n^{-6}$ for all $i$, we have  $\delta_i=\delta_i^Y+\eps_{i,1}$ with: 
$$|\eps_{i,1}|\leq a^2\xi^2_n \delta_i^Y+a^2\xi_n^2(d'+2)k_n r_{i,k_n}^2+2a\xi_n \delta_i^Y+2a\xi_n \sqrt{(d'+2)k_n \delta_i^Y} r_{i,k_n}+2a^2\xi_n^2 \sqrt{(d'+2)k_n \delta_i^Y} r_{i,k_n}.$$ 

By Lemma \ref{knn} we have $\pr(r_n \geq a \rho_n)\leq n^{-7}$, where $r_n=\max_{i}(r_{i,k_n})$. Because  $\rho_n\leq \xi_n$ we have,
with probability greater than $1-2n^{-6}$, for all $i$
\begin{equation}\label{eq1p} 
 |\eps_{i,1}|\leq a \xi_n \delta_i^Y + a\xi_n^{2} \sqrt{\delta_i^Y} + a \xi_n^4.
\end{equation}

First we will prove that $\delta_i \rightarrow\chi^2(d')$ in distribution. Consider the distribution of the random variable $y_j$ for $j=1,\ldots,k_n$. By Proposition \ref{mixturelaw} it is the same as the following mixture law: with probability $1-p_n$: $z_i\equiv y_j/r_{i,k_n}$  is drawn according to an uniform
law on $\mathcal{B}_{d'}(O,1-cr_{i,k_n})$ and with probability $p_n$: $z_j\equiv y_j/r_{i,k_n}$ is drawn 
according to a residual law (supported by  $\mathcal{B}_{d'}(O,1)$) with  $p_n\leq a\rho_n$.  
Denote by $K_i$ the number of $y_j$ belonging to the uniform part of the mixture ($K_i$ has distribution $\text{Binom}((1-p_n),k_n)$), and introduce 
$\kappa_n=\max_i |(k_n-K_i)/\sqrt{k_n}|$. 
By application of Lemma \ref{binom} (with $k'_n=k_n$ and $q_n=a\rho_n$, because $k_n \ll n^{1/(d'+1)}$ we have $\rho_n\sqrt{k_n} \ln(n)\rightarrow 0$)
we have, for $n$ large enough:
\begin{equation}\label{kappa}
 \pr(\ln(n)\kappa_n\geq a)\leq n^{-6}.
\end{equation}

For ease of writing let us suppose that $z_1,\ldots,z_{K_i}$ are the observations belonging to the uniform part of the mixture.
Consider $z^*_{K_i+1},\ldots, z^*_{n}$ i.i.d., uniformly distributed on $\mathcal{B}_{d'}(O,1)$. We will write $u_j\equiv z_j$ if $j\leq K_{i}$, and 
$u_j\equiv z^*_j$ if $j>K_i$. If we define   now   $e_j\equiv z_j-z_j^*$ if $j>K_{i}$, then  
\begin{small}
	\begin{align*}
	\delta_i^Y|\{r_n\leq a \xi_n\}=&(d'+2)k_n\Bigg\|\frac{1}{k_n}\sum_{j=1}^{K_i} u_j + \frac{1}{k_n}\sum_{j=K_i+1}^{k_n} z_j^*+ \frac{1}{k_n}\sum_{j=K_i+1}^{k_n} z_j- \frac{1}{k_n}\sum_{j=K_i+1}^{k_n} z_j\Bigg\|^2\\
	=&(d'+2)k_n\Bigg\|\frac{1}{k_n}\sum_{j=1}^{k_n} u_j -\frac{1}{k_n}\sum_{j=K_i+1}^{k_n} e_j\Bigg\|^2\\
	=&(d'+2)k_n\Bigg[\Bigg\|\frac{1}{k_n}\sum_{j=1}^{k_n} u_j\Bigg\|^2+\Bigg\|\frac{1}{k_n}\sum_{j=K_i+1}^{k_n} e_j\Bigg\|^2-2\Big\langle \frac{1}{k_n}\sum_{j=1}^{k_n} u_j,\frac{1}{k_n}\sum_{j=K_i+1}^{k_n} e_j \Big\rangle \Bigg].
	\end{align*}
\end{small}
Consider $u_j/(1-cr_{i,k_n})$ for $i=1,\ldots,n$, which is an uniform sample on a $d'$-dimensional unit ball,
and $\delta_i^U=(d'+2)k_n\|\sum_j u_j/(1-cr_{i,k_n})\|^2$. 
Then, 
 
\begin{equation}
 \delta_i^Y|\{r_n\leq a \xi_n\}=(1-cr_{i,k_n})^2\delta_i^U +\eps_{2,i} \text{ with }|\eps_{2,i}|\leq a\sqrt{\delta_i^U}\kappa_n + a\kappa_n^2.
\end{equation}

By Proposition \ref{unif1}, $\delta_i^U\stackrel{\mathcal{L}}{\longrightarrow} \chi^2(d')$ when $k_n\rightarrow +\infty$.
This and \eqref{kappa} implies that  $\eps_{2,i}\stackrel{a.s}{\longrightarrow} 0$. From $\pr(\{r_n\leq a \xi_n\})\rightarrow 0$ we obtain
$\delta_i^Y\stackrel{\mathcal{L}}{\longrightarrow} \chi^2(d')$. That in turns,   by \eqref{eq1p} implies that $ \eps_{i,1}\stackrel{\mathcal{L}}{\longrightarrow} 0$.  
 
Lastly, 
 
\begin{equation}\label{deltaicvloi}
 \delta_i \stackrel{\mathcal{L}}{\longrightarrow} \chi^2(d').
\end{equation}

Regarding Theorem \ref{level}, we need an upper bound for $\pr(\max_i \delta_i>t)$. If we use the classical rough bound
$\pr(\max_i \delta_i>t)\leq n \pr(\delta_i>t)$, we get $\pr(\max_i \delta_i>t)\leq n \Psi_{d'}(t)+n o(1)$, which is useless because we have no control on the $no(1)$ term. To solve this problem  we aim to get a better upper bound
for $\pr(\max_i \delta_i>t)$. This is done using Theorem 2.4 in \cite{pi:94}, which states that for all $i=1,\dots,n$
\begin{equation}\label{pin}
 \pr(\delta_i^U>t)\leq \frac{2 e^3}{9}F_{d'}(t).
\end{equation}

Now the aim is to prove that, conditionally to $r_n\leq a \xi_n$,  $(\ln n)^{1/3} \max_i|\eps_{i,2}|\stackrel{a.s.}{\longrightarrow} 0 $. First we have 

$$ \pr\left(|\eps_{i,2}|>\frac{\lambda}{(\ln n)^{1/3}}\right) \leq \pr\left(\max_{1\leq i\leq n}\sqrt{\delta_i^U}\kappa_n\geq\frac{\lambda}{(\ln n)^{1/3}}\right).$$

As 
$$\pr\left(\max_{1\leq i\leq n}\sqrt{\delta_i^U}\kappa_n\leq\frac{\lambda}{(\ln n)^{1/3}}\right)\geq \pr\left(\max_{1\leq i\leq n}\sqrt{\delta_i^U}\leq\frac{\lambda (\ln n)^{2/3}}{a} \text{ and } \kappa_n\leq \frac{a}{\ln n}\right)$$ 
we have 
$$\pr\left(\max_{1\leq i\leq n}\sqrt{\delta_i^U}\kappa_n\geq\frac{\lambda}{(\ln n)^{1/3}}\right)\leq \pr\left(\max_{1\leq i\leq n}\sqrt{\delta_i^U}\geq\frac{\lambda (\ln n)^{2/3}}{a} \text{ or } \kappa_n\geq \frac{a}{ \ln n}\right) $$
and, finally, by \eqref{kappa} and \eqref{pin}
\begin{equation*}
 \pr\left(\max_{1\leq i\leq n}\sqrt{\delta_i^U}\kappa_n\geq\lambda\right)\leq n\frac{2 e^3}{9}F_{d'}\left( \frac{\lambda^2 (\ln n)^{4/3}}{a^2}\right)+n^{-6}.
\end{equation*}
From
$$n\frac{2 e^3}{9}F_{d'}\left( \frac{\lambda^2 (\ln n)^{4/3}}{a^2}\right)\sim \frac{2e^3n}{9}\frac{\exp(-\lambda^2\ln n^{4/3}/(2a^2))(\lambda \ln n/2a)^{d'-2}}{\Gamma(d'/2)},$$
we obtain that 
$$\sum \pr\left(\max_{1\leq i\leq n}\sqrt{\delta_i^U}\kappa_n\geq\lambda\right) < +\infty$$
 so, by Borel-Cantelli's Lemma, 
$(\ln n)^{1/3}\max_i|\eps_{i,2}|\stackrel{a.s.}{\longrightarrow} 0 $.

Applying exactly same calculus it can be obtained from  $(\ln n)^2 \xi_n \rightarrow 0$ and \eqref{eq1p} that, conditionally to $r_n\leq a \xi_n$ 
$\max_{i}\delta_i\leq \max_i \delta_i^U+\eps_{3,n}$ with $(\ln n)^{1/3}\eps_{3,n}\stackrel{a.s.}{\longrightarrow} 0$.
As a result,
\begin{equation*}
\pr\Bigg(\max_{1\leq i\leq n} \delta_i \geq t \Big|
\Big\{\{r_n\leq a\xi_n\}\cap \big\{ |\eps_{3,n}|\leq a(\ln n)^{-1/3} \big\}\Big\}
\Bigg)\leq \frac{2e^3n}{9}F_{d'}\left(t-a(\ln n)^{-1/3} \right).
\end{equation*}

Introduce  $t_n=t_{n,\alpha}\equiv F_{d'}^{-1}(9\alpha/(2e^3n))$. Notice that $t_{n}\rightarrow +\infty$ so that we can use the usual equivalent of $F_{d'}(t_n)$ and get 
$$\frac{2e^3n}{9}\frac{e^{-t_n/2}(t_n/2)^{d'/2-1}}{\Gamma(d'/2)}\rightarrow \alpha \text{ when } n\rightarrow +\infty.$$ 
Now note that $2e^3n/9F_{d'}(x_n)\rightarrow \alpha \Leftrightarrow x_n= 2\ln n +(d'-2)\ln(\ln n) + 2\ln \left(2e^3/(9\alpha \Gamma(d'/2)) \right)+o(1)$.
Thus:
 
\begin{equation*}
\pr\Bigg(\max_{1\leq i\leq n} \delta_i \geq t_n \Big|
\Big\{\{r_n\leq a\xi_n\}\cap \big\{ |\eps_{3,n}|\leq a(\ln n)^{-1/3} \big\}\Big\}
\Bigg)\leq \alpha +o(1).
\end{equation*}

Lastly, because e.a.s. $r_n\leq a \xi_n$ (which follows from Lemma \ref{knn}) and because $|\eps_{3,n}|(\ln n)^{1/3}\stackrel{a.s.}{\longrightarrow} 0$ we have $\pr(\{r_n\leq a\xi_n\}\cap \{|\eps_{3,n}|\leq a (\ln n)^{-1/3}\})\rightarrow 1$, and so

$$\pr\left(\max_{1\leq i\leq n} \delta_i \geq t_n \right)\leq \alpha +o(1),$$
which proves Theorem \ref{level}.
For $\lambda >6$ we have
$$\pr\left(\max_{1\leq i\leq n} \delta_i \geq \lambda \ln n \right)\leq a n^{1-\lambda/2} (\ln n)^{d'/2-1}$$
so that, once again, by the Borel--Cantelli's lemma, we obtain that if $\lambda>6$,
\begin{equation}\label{pourfin}
 \text{Under } H_0 \text{: }\Delta_{n,k_n} \geq \lambda \ln n\text{ e.a.s. }
\end{equation}

\subsection{Proofs under $H_1$ ($\partial M\neq \emptyset$)}

The idea of the proof is the following. When $\partial M\neq \emptyset$, there exists an observation $X_{i_0}$ close enough to the boundary (that is, such that $d(X_{i_0},\partial M)\ll r_{i_0,k_n}$). Then $\mathcal{B}(X_{i_0},r_{i_0,k_n})\cap M$  looks like a ``half ball'', so that
$\Delta_{n,k_n}\geq \delta_{i_0,k_n} \geq (d'+2)k_n (\alpha_{d'}+o(1))\to \infty$, $\alpha_{d'}$ being a positive constant (obtained from Proposition \ref{unif2}).

More precisely, set $\eps_n\equiv a\ln(n)/n$. We will first prove that for a suitably chosen constant $a$, with probability one, for $n$ large enough
there exists an $X_{i_0}\in \partial M\oplus \eps_n \mathcal{B}\equiv \{x: d(x,\partial M)\leq \eps_n\}$. Indeed,
as $\partial M$ is a compact $(d'-1)$-manifold of class $\mathcal{C}^2$, by Proposition 14 in \cite{tha08} it has positive reach. Then by Theorem 5.5 in \cite{fed59},  for $n$ large enough $|\partial M\oplus \eps_n \mathcal{B}|=C_{\partial M} \eps_n(1+o(1))$ where $C_{\partial M}>0$ is a constant depending only on $\partial M$.

Thus, 
$$\pr\big(\left(\partial M \oplus \eps_n\mathcal{B}\right)\cap \mathcal{X}_n =\emptyset\big)\leq
\big(1-f_0C_{\partial M}\eps_n(1-o(1) \big)^n \leq n^{-f_0C_{\partial M}a+o(1)}.$$
If we choose $a>(f_0C_{\partial M})^{-1}$, then as a direct application of the Borel--Cantelli's lemma, with probability one, for $n$ large enough, $\exists i_0, d(X_{i_0},\partial M)\leq  \eps_n$. Now we are going to prove that
\begin{equation}\label{bleu}
 \text{for all }X_{i_0}\in \partial M\oplus \mathcal{B}(0,\eps_n), \text{ we have }r_{i_0,k_n}\geq \sqrt{\eps_n}\text{ e.a.s.}
\end{equation}
This will allow us to apply 
Proposition \ref{geofond} part $5$, which implies that $\mathcal{B}(X_{i_0},r_{i_0,k_n})$ is ``close'' to a half ball.

First we assume $n$ large enough to ensure that $\eps_n<1$.
Cover $\partial M$ with $\nu_n\leq B\eps_n^{(1-d')/2}$ balls,
centred at $\{x_1,\dots x_{\nu_n}\}\subset  \partial M$ with radius $\sqrt{\eps_n}$.
Observe that
$$ \pr\Big(\exists X_{i_0}: r_{i_0,k_n}\leq \sqrt{\eps_n}\Big)=
 \pr \Big(\exists X_{i_0}: \#  \big\{\mathcal{B}\big(X_{i_0}:\sqrt{\eps_n}\big)\cap \mathcal{X}_n\big\} \geq k_n\Big).$$
 Now, if $X_{i_0}\in  \partial M \oplus \eps_n\mathcal{B}$, then  there exists a $y_{i}\in \partial M$ such that $\|X_{i_0}-y_i\|\leq \eps_n$ and 
$y_i$ belongs to some ball $\mathcal{B}(x_r,\sqrt{\eps_n})$ for $r=1,\dots,\nu_n$. Then

\begin{equation}\label{th4_6}
 \pr\Big(\exists X_{i_0}\in \partial M\oplus \eps_n \mathcal{B}: r_{i_0,k_n}\leq \sqrt{\eps_n} \Big)\leq 
\sum_{i=1}^{\nu_n} \pr \Big(\# \big\{\mathcal{B}\big(x_i, 3\sqrt{\eps_n}\big)\cap \mathcal{X}_n\big\} \geq k_n\Big).
\end{equation}

Applying Corollary \ref{propproba} part $1$ together with $f\leq f_1$, we get that there exists a constant  $b$ such that
\begin{equation*}
 \pr \Big(\# \big\{\mathcal{B}\big(x_i, 3\sqrt{\eps_n}\big)\cap \mathcal{X}_n\big\} \geq k_n\Big) \leq   
\sum_{j=k_n}^{n} \binom{n}{j} \left( b \eps_n^{d'/2}\right)^j.
\end{equation*}

Now from the bounds $n!/(n-j)!\leq n^j$ and $\sum_{j=k}^n x^j/j!\leq x^ke^x/k!$, we obtain
\begin{equation} \label{th4_70}
 \pr \Big(\# \big\{\mathcal{B}\big(x_i, 3\sqrt{\eps_n}\big)\cap \mathcal{X}_n\big\} \geq k_n\Big) \leq   
\sum_{j=k_n}^{n} \frac{1}{j!} \left( b n\eps_n^{d'/2}\right)^j\leq \frac{\left( bn\eps_n^{d'/2}\right)^{k_n}}{k_n!}\exp(bn\eps_n^{d'/2}).
\end{equation}

Finally, \eqref{th4_6}, \eqref{th4_70} and the upper bound on $\nu_n$ imply
$$  \pr\Big(\exists X_{i_0}\in \partial M\oplus \eps_n \mathcal{B}, r_{i_0,k_n}\leq \sqrt{\eps_n} \Big)\leq 
B \eps_n^{(1-d')/2} \frac{\left( bn\eps_n^{d'/2}\right)^{k_n}}{k_n!}\exp(bn\eps_n^{d'/2}).$$
If we apply Stirling's formula, for $n$ large enough 
\begin{equation*}
  \pr\Big(\exists X_{i_0}\in \partial M\oplus \eps_n \mathcal{B}, r_{i_0,k_n}\leq \sqrt{\eps_n} \Big)\leq 
\exp\Big\{-k_n \ln(k_n)+k_n+\frac{1-d'}{2}\ln(\eps_n) +k_n\ln(bn\eps_n^{d'/2})+bn\eps_n^{d'/2}\Big\}.
\end{equation*}

From $k_n\gg \sqrt{n \ln(n)}$ when $d'=1$ and $k_n\gg \ln(n)$ for any dimension $d'>1$, it follows that
$$  \pr\Big(\exists X_{i_0}\in \partial M\oplus \eps_n \mathcal{B}, r_{i_0,k_n}\leq \sqrt{\eps_n} \Big)\leq 
\exp\big(-k_n \ln(k_n)(c_{d'}+o(1))\big) $$
with $c_{2}=2$ and $c_{d'}=1$ when $d'\neq 2$.

Then, $k_n\gg (\ln (n))$ ensures that
$$ \sum_n \pr\Big(\exists X_{i_0}\in \partial M\oplus \eps_n \mathcal{B}, r_{i_0,k_n}\leq \sqrt{\eps_n} \Big)<\infty. $$
The proof of \eqref{bleu} follows by a direct application of the Borel--Cantelli's  lemma.\\

For an observation $X_{i_0}$ such that $d(X_{i_0},\partial M)\leq c_{\partial M} \ln (n)/n$, 
denote by $x_0$ its projection onto $\partial M$. Recall that  $u_{x_0}$ denotes the unit vector tangent to $M$ and normal to $\partial M$ pointing outward. Now introduce $Y=\varphi_{X_{i_0}}(X)|\{\|X-X_{i_0}\|\leq r_{i_0,k_n}\}$.

On the one hand, a direct consequence of Proposition \ref{mixturelawbound} is that
$$\E\left(\Big\langle \frac{Y-X_{i_0}}{r_{i_0,k_n}}, -u_{x_0}\Big\rangle \right) \geq \alpha_{d'}-a r_{i_0,k_n}\geq \alpha_{d'}-a r_n.$$

On the other hand, by Hoeffding's inequality,

 $$\pr\left(\frac{1}{k_n}\sum_{k=1}^{k_n}
 \Big\langle \frac{Y_{k(i_0)}-X_{i_0}}{r_{i_0,k_n}}, -u_{x_0}\Big\rangle
 -\E\left(\Big\langle \frac{Y-X_{i_0}}{r_{i_0,k_n}}, -u_{x_0}\Big\rangle \right) \leq -t \right)\leq \exp(-2t^2k_n).$$

Thus
$$\pr\left(\frac{1}{k_n}\sum_{k=1}^{k_n}
 \Big\langle \frac{Y_{k(i_0)}-X_{i_0}}{r_{i_0,k_n}}, -u_{x_0}\Big\rangle 
  \leq \alpha_{d'}-a r_n-(\ln n)^{-1}  \right)\leq 2\exp(-2k_n/(\ln n)^2).$$

 Let us denote 
 $$Z=\frac{1}{k_n}\sum_{k=1}^{k_n}\frac{Y_{k(i_0)}-X_{i_0}}{r_{i_0,k_n}}\quad \text{and} \quad Z^*=\frac{1}{k_n}\sum_{k=1}^{k_n}\frac{X^*_{k(i_0)}-X_{i_0}}{r_{i_0,k_n}},$$
  by 
  Lemma \ref{localpcabound} we have that there exists a sequence $\epsilon'_n$ such that, with probability greater than $1-n^{-6}$,
  $Z^*=Z+E_{i_0,n}Z+\epsilon'_n$ with $\|E_{i_0,n}\|_{\text{op}}\leq a \xi_n$ and $\|\epsilon'_n\|\leq a \xi_n r_{i_0,n}$ with 
  $$\xi_n=\max\Big((\ln n/n)^{1/(2d')}, (k_n/n)^{1/d'}, \sqrt{\ln n/k_n}\Big)$$ 
  as  in previous section, and so  with probability greater than $1-n^{-6}$,
  $\langle Z^*,-u_{x_0} \rangle \geq (1-a\xi_n)\langle Z, -u_{x_0} \rangle - a \xi_nr_{i_0,n}$ thus, we have that
 $$\pr\left(\frac{1}{\sqrt{(d'+2)k_n}}\sqrt{\delta_{i_0,k_n}} 
  \leq (1-a\xi_n)(\alpha_{d'}-a r_n-(\ln n)^{-1})  -a \xi_nr_{i_0,n} \right)\leq 2\exp(-2k_n/(\ln n)^2)+n^{-6}.$$
From $k_n\gg (\ln n)^4$, we get $\sum_n n(\exp(-2k_n/(\ln n)^2)+n^{-6})< +\infty$, so that, by Borel--Cantelli's lemma for all $i_0$
such that  $d(X_{i_0},\partial M)\leq c_{\partial M} \ln (n)/n$, we have 
$$\delta_{i_0,k_n}\geq(d'+2)k_n((1-a\xi_n)(\alpha_{d'}-a r_n-(\ln n)^{-1})  -a \xi_nr_{i_0,n})^2,$$ 
with probability one for $n$ large enough.
As by Lemma \ref{knn} $r_n\stackrel{a.s.}\rightarrow 0$, and because $\Delta_{n,k_n}\geq \delta_{i_0,k_n}$ we have
for all $\lambda<1$,
\begin{equation}\label{powertest}
 \pr_{H_1}\Big( \Delta_{n,k_n}\geq(d'+2)\alpha_{d'}^2\lambda k_n\Big)=1 \text{ for } n \text{ large enough }.
\end{equation}

Now, observe that $k_n\gg (\ln (n))^4$ ensures the existence
 of an $n_1$ such that for all 
$n\geq n_1$, ${k_n}(d'+2)\alpha_{d'}^2/2\geq t_{n,\alpha}\sim 2 \ln n$, which together with \eqref{powertest} 
prove Theorem \ref{power0}.

Similarly, for all $\lambda>6$, $\pr_{H_1}(\Delta_{n,k_n}\geq \lambda \ln n)=1$ for $n$ large enough and
by \eqref{pourfin} we also have  $\pr_{H_0}(\Delta_{n,k_n}\leq \lambda \ln n)=1$ for $n$ large enough,
which concludes the proof of Theorem \ref{consistentrule}.

\subsection{Useful lemmas}

We will now give the details of the proofs of the lemmas and propositions used in the proofs of the main theorems.
First we focus on the asymptotic  behavior of the ``centroid movement'' when considering uniform samples on a ball or on a half ball.

\begin{proposition}\label{unif1}
Let $X_1,\ldots, X_n$ be an i.i.d. sample uniformly drawn on $\mathcal{B}(x,r)\subset \mathbb{R}^{d}$ and write
$\overline{X}_n\equiv \frac{1}{n}\sum_{i=1}^n X_i$. We have
\begin{equation} \label{xiconv}
\frac{(d+2) n \|\overline{X}_n-x\|^2}{r^2}\stackrel{\mathcal{L}}{\longrightarrow} \chi^2(d).
\end{equation}
\end{proposition}

\begin{proof} Taking $(X-x)/r$ we can assume that $X$ obeys the uniform distribution on $\mathcal{B}(0,1)$.

If we write $X=(X_{.,1},\dots,X_{.,d})$, then the density of $X_{.,i}$
is
\begin{equation*}
f(x)=\frac{1}{\sigma_d}\sigma_{d-1}(1-x^2)^{(d-1)/2}\mathbb{I}_{[-1,1]}(x),
\end{equation*}
and so
\begin{equation*}
\mbox{Var}(X_{.,i})= \int_{-1}^1 x^2\frac{1}{\sigma_d}\sigma_{d-1}(1-x^2)^{(d-1)/2}dx 
 = \frac{\sigma_{d-1}}{\sigma_d}B\big(3/2,(d+1)/2\big),
 \end{equation*}
 where $B(x,y)$ is the Beta function. If we use the fact that $\sigma_d= \frac{\pi^{d/2}}{\Gamma(\frac{d}{2}+1)}$ and  that $B(x,y)=\frac{\Gamma(x)\Gamma(y)}{\Gamma(x+y)}$, we get
 $$\frac{\sigma_{d-1}}{\sigma_{d}}B(3/2,(d+1)/2)=\frac{\Gamma(\frac{d+2}{2})}{\sqrt{\pi}\Gamma(\frac{d+1}{2})}\times \frac{\Gamma(\frac{3}{2})\Gamma(\frac{d+1}{2})}{\Gamma(\frac{d+4}{2})}=\frac{\Gamma(\frac{d+2}{2})\Gamma(\frac{3}{2})}{\sqrt{\pi}\Gamma(\frac{d+4}{2})}.$$
Since $\Gamma(z+1)=z\Gamma(z)$ and $\Gamma(1/2)=\sqrt{\pi}$, we obtain that
 $$\frac{\sigma_{d-1}}{\sigma_{d}}B(3/2,(d+1)/2)=\frac{\sqrt{\pi}\frac{1}{2}}{\sqrt{\pi}\frac{d+2}{2}}=\frac{1}{d+2}.$$
Now, to prove (\ref{xiconv}), observe that 
$$(d+2)n\|\overline{X}_n\|^2=\|\sqrt{(d+2)n}\,\overline{X}_n\|^2\stackrel{\mathcal{L}}{\longrightarrow}N(0,I_d).$$
Then, $\|\sqrt{(d+2)n}\, \overline{X}_n\|^2\stackrel{\mathcal{L}}{\longrightarrow} \|N(0,I_d)\|^2$.
Lastly, it is well known that $\|N(0,I_d)\|^2\stackrel{\mathcal{L}}{=}\chi^2(d)$.\end{proof}

\begin{proposition}\label{unif2}
Let $X$ be uniformly drawn on $\mathcal{B}_{u}(x,r)=\mathcal{B}(x,r)\cap\{z\in \mathbb{R}^d:\langle z-x,u \rangle\geq 0\}$ where $u$ is a unit vector. Then,
\begin{equation} \label{xiconv2}
\mathbb{E}\left(\frac{\langle X-x,u\rangle}{r}\right)=\alpha_{d} \text{, where }\alpha_d=\left(\frac{\Gamma(\frac{d+2}{2})}{\sqrt{\pi}\Gamma(\frac{d+3}{2})}\right).
\end{equation}
\end{proposition}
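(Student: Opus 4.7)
By the translation $z \mapsto z-x$ and the scaling $z\mapsto z/r$, together with the rotational invariance of the Lebesgue measure on $\mathbb{R}^d$, I can reduce to the case $x=0$, $r=1$ and $u=e_1$ (the first vector of the canonical basis). In this normalized setting it suffices to compute
\[
\mathbb{E}\bigl(\langle X,e_1\rangle\bigr)=\frac{1}{|\mathcal{B}_{e_1}(0,1)|}\int_{\mathcal{B}_{e_1}(0,1)} z_1\,dz = \frac{2}{\sigma_d}\int_{\mathcal{B}_{e_1}(0,1)} z_1\,dz,
\]
since the half-ball has volume $\sigma_d/2$.

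The plan is then a direct Fubini/slicing argument. For each fixed value $z_1=t\in[0,1]$, the section $\{z\in\mathcal{B}(0,1):z_1=t\}$ is a $(d-1)$-dimensional ball of radius $\sqrt{1-t^2}$, whose Lebesgue measure equals $\sigma_{d-1}(1-t^2)^{(d-1)/2}$. Therefore
\[
\int_{\mathcal{B}_{e_1}(0,1)} z_1\,dz = \sigma_{d-1}\int_0^1 t\,(1-t^2)^{(d-1)/2}\,dt.
\]
A substitution $u=1-t^2$ evaluates this elementary one-dimensional integral to $\sigma_{d-1}/(d+1)$, so
\[
\mathbb{E}\bigl(\langle X,e_1\rangle\bigr)=\frac{2\,\sigma_{d-1}}{(d+1)\,\sigma_d}.
\]

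To finish, I would substitute $\sigma_k=\pi^{k/2}/\Gamma(k/2+1)$, which gives
\[
\mathbb{E}\bigl(\langle X,e_1\rangle\bigr)=\frac{2\,\Gamma(d/2+1)}{\sqrt{\pi}\,(d+1)\,\Gamma((d+1)/2)},
\]
and then use the functional equation $\Gamma((d+3)/2)=\tfrac{d+1}{2}\Gamma((d+1)/2)$ to rewrite the denominator, obtaining exactly $\Gamma((d+2)/2)/\bigl(\sqrt{\pi}\,\Gamma((d+3)/2)\bigr)=\alpha_d$. There is no real obstacle here; the whole statement is a textbook centroid computation for a half-ball, and the only step requiring any care is the bookkeeping with Gamma values to match the constant $\alpha_d$ in the stated form.
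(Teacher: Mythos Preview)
Your proof is correct. The paper does not supply its own argument for this proposition (it writes ``Proof is let to the reader''), so there is nothing to compare against; your slicing/Fubini computation followed by the Gamma-function bookkeeping is exactly the expected route and every step checks out.
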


\begin{proof} First assume that
 $r=1$, $x=0$ and $u=e_1=(1,0,\ldots,0)$. The marginal density of $X_1$ is
$$f_{X_1}(t)=\frac{2}{\sigma_d}\sigma_{d-1}(1-t^2)^{(d-1)/2}\mathbb{I}_{[0,1]}(x),$$
so 
\begin{multline*}
\mathbb{E}(X_1)=\int_0^1 2 \frac{\sigma_{d-1}}{\sigma_d}x(1-x^2)^{d-1}dx= \frac{\sigma_{d-1}}{\sigma_d}\frac{\Gamma(1)\Gamma(\frac{d+1}{2})}{\Gamma(\frac{d+3}{2})}=
\frac{\Gamma(\frac{d+2}{2})}{\sqrt{\pi}\Gamma(\frac{d+3}{2})}=\alpha_d.
\end{multline*}
For a general value of $r$, $x$ and $u$, define $Y=A_u(X-x)/r$ where $A_u$ is a rotation matrix that sends $u$
to $(1,0,\ldots,0)$ (with $r>0$). Then
$Y$ is uniformly distributed on $\mathcal{B}_{e_1}(0,1)$ and so \eqref{xiconv2} holds.
\end{proof}

Now we aim to make explicit how close to an uniform sample on a ball or a half ball are the nearest neighbors statistics
as $n\rightarrow +\infty$. First we detail some consequences of the regularity of $M$ and $\partial M$.
For $x\in M$ we denote by $N_xM$ the normal space at $x$.
For $x\in \partial M$ we denote by $u_x$ the unit normal outer vector to $\partial M$, that is, $\|u_x\|=1, u_x\in T_xM\cap N_x \partial M$ and for all $\eps>0$ 
there exists an $r_{\eps}$ such that $\|y-x\|\leq r_{\eps}\Rightarrow \langle \frac{y-x}{\|y-x\|},u_x \rangle \leq \eps$ 
.
Write $\varphi_x:M\rightarrow x+T_x M$ for the orthogonal projection onto the  affine tangent space. Let $J_x(y)$ be the Jacobian matrix of $\varphi_x^{-1}$ and $G_x(y)=\sqrt{\det(J'_x(y)J_x(y))}$.

\begin{proposition}\label{geofond}
 Let $M\subset \mathbb{R}^d$ be a compact $\mathcal{C}^2$ $d'$-dimensional manifold with either
 $\partial M=\emptyset$ or $\partial M$ is a $\mathcal{C}^2$ $(d'-1)$-dimensional manifold. Then, there exists an $r_M>0$ and $c_M>0$ such that for all $r\leq r_M,$
\begin{enumerate}
 \item for all $x\in M$, $\varphi_x$ is a $\mathcal{C}^2$ bijection from $M\cap \mathcal{B}(x,r)$ to $\varphi_x\big(M\cap \mathcal{B}(x,r)\big)$ for all $r\leq r_M$.
 \item  For all $x\in M$ and $y\in x+T_xM$ such that $\|x-y\|\leq r_M$, $|G_x(y)-1|\leq c_M\|x-y\|$.
 \item For all $x,y\in M$ such that $\|x-y\|\leq r_M$, $\|\varphi_x(y)-y\|\leq c_M \|x-\varphi_x(y)\|^2\leq c_M\|x-y\|^2$.
 \item For all $x\in M$, if $d(x,\partial M)\geq r$, then
 \begin{equation*}
 \mathcal{B}(x,r-c_Mr^2)\cap(x+T_xM)\subset \varphi_x(\B(x,r)\cap M)\subset \mathcal{B}(x,r)\cap(x+T_xM).
 \end{equation*}
 \item For all  $x\in M $ with $d(x,\partial M)\leq r^2$, write $x^*$ for its projection onto $\partial M$ and define
 $H_x^-=\{y:\langle y-x,u_{x^*}\rangle\leq -c_Mr^2\}$ and 
 $H_x^+=\{y:\langle y-x,u_{x^*}\rangle \leq c_Mr^2\}$. Then,
 \begin{equation*}
 H_x^-\cap\mathcal{B}(x,r-c_Mr^2)\cap(x+T_xM)\subset \varphi_x(\B(x,r)\cap M)\subset H_x^+\cap \mathcal{B}(x,r)\cap(x+T_xM).
 \end{equation*}
 \end{enumerate}
\end{proposition}
\begin{proof} 
\textbf{1.} 
When the manifold has no boundary, this result is classic (see, for instance Lemma 16 in \cite{ma:16}), but, as far as our knowledge extends, it has not been proved when $M$ has a boundary.

It only has to be proved that there exists a radius $\rho_{M,0}>0$ such that all the $\varphi_x$ restricted to  $M\cap \mathcal{B}(x,\rho_{M,0})$ are one to one.
Proceeding by contradiction, let $r_n\rightarrow 0$, $x_n$, $y_n$ and $z_n$ be such that $\{y_n, z_n\}\subset\mathcal{B}(x_n,r_n)$ and
 $\varphi_{x_n}(y_n)=\varphi_{x_n}(z_n)$.  Since $M$ is compact, we can assume that (by taking a subsequence if necessary)
$x_n\rightarrow x\in M$. Put $w_n\equiv (y_n-z_n)/{\|y_n-z_n\|}\rightarrow w$.
Since $\varphi_{x_n}(y_n)=\varphi_{x_n}(z_n)$, we have $w_n\in (T_{x_n}M)^{\bot}$.  Since  $M$ is of class $\mathcal{C}^2$, we have $w\in (T_{x}M)^{\bot}$.
Let $\gamma_n$ be a geodesic curve on $M$ that joins $y_n$ to $z_n$ (there exists at least one since $M$ is compact and path connected). As $M$ is compact and $\mathcal{C}^2$,
it has an injectivity radius $r_{inj}>0$. Therefore (see Proposition 88 in \cite{berger}), if we take $n$ so large 
that $r_n\leq r_{inj}/2$, we may take $\gamma_n$ to be the (unique) geodesic which
 is the image, by the exponential map, of a vector $v_n\in T_{y_n}M$. The Taylor expansion of the exponential map 
shows that $w_n=v_n/\|y_n-z_n\|+o(1)$. Then, taking the limit as $n\rightarrow \infty$, we get $w\in T_{x}M$,
which contradicts the fact that $w\in (T_{x}M)^{\bot}$.

As a conclusion, there exists an $r_0$ such that for all $x\in M$, $\varphi_x$ is one to one
from $M\cap \mathcal{B}(x,r)$ to $\varphi_x\big(M\cap \mathcal{B}(x,r)\big)$ (then the existence of an $r_1$ 
such that for all $x\in M$ and $r\leq r_1$, $\varphi_x$ is one to one and $\mathcal{C}^2$ is easily obtained).\\

\textbf{2 and 3.}
For all $x\in M$ there exist $k$ functions $\Phi_{x,k}:\varphi_x\big( M\cap\mathcal{B}(x,r_{1})\big)-x\rightarrow \mathbb{R}$ 
such that
\begin{eqnarray*}
  & \varphi_x^{-1}: &\varphi_x\big( M\cap\mathcal{B}(x,r_1)\big) \rightarrow M\cap\mathcal{B}(x,r_1)\\
  \nonumber & & x+\begin{pmatrix} y_1\\ \vdots \\ y_{d'} \\ 0_{d-d'} \end{pmatrix} \mapsto x + \begin{pmatrix} y\\ \Phi_{x,d'+1}(y)\\ \vdots \\ \Phi_{x,d}(y) \end{pmatrix}
\end{eqnarray*}
The compactness of $M$ together with its $\mathcal{C}^2$ regularity allows us to find a (uniform) radius
$r_2$ such that all the $\Phi_{x,k}$ are $\mathcal{C}^2$ on $\varphi_x(M\cap\mathcal{B}(x,r_2))$.
Note that as $\varphi_x$ is
the orthogonal projection, we have, for all $x$ and $k$, that $\nabla_0 \Phi_{x,k}=0$. Once again 
the smoothness and compactness assumptions guarantee that the eigenvalues of the Hessian matrices $H(\Phi_{x,k})(0)$ are uniformly bounded from above
by some $\lambda_M>0$.

Thus, first 
\begin{equation}\label{truc1}
 \|\varphi_x^{-1}(y)-y\|^2=\sum_{k=1}^{d-d'} (\Phi_{x,d'+k}(y-x))^2\leq (d-d')\lambda_M \|x-y\|^4+o(||x-y||^4),
\end{equation}

 and then there exist a $c_3$ and $r_3$ such that for all $(x,y)\in M\times \varphi_x(M\cap B(x,r_2))$   such that $\|x-y\|\leq r_3$,
 \begin{equation}\label{Apeq2}
  \|\varphi_x^{-1}(y)-y\|\leq c_3\|x-y\|^2.
 \end{equation}
Second:  
$$J_x(y)=\begin{pmatrix} 
       I_{d'}\\
       \nabla_y \Phi_{x,d'+1}\\
       \vdots\\       
       \nabla_y \Phi_{x,d}
      \end{pmatrix}=\begin{pmatrix} 
       I_{d'}\\
       O(\|y\|)\\
       \vdots\\       
       O(\|y\|)
      \end{pmatrix}
       \text{ and }  {J_x(y)'J_x(y)=I_{d'}+O(\|y\|)}.$$
This, together with the differentiability of the determinant, implies that 
there exist a $c_4>0$ and $r_4>0$ such that for all $x,y\in M$ fulfilling $\|x-y\|\leq r_4$,  
 \begin{equation*}
|G_x(y)-1|\leq c_4\|x-y\|.
 \end{equation*}
 
\textbf{4.} Only the first inclusion has to be proved: the second one is obvious.
Introduce $\tilde{r}=$ \break $\min\{r_1,r_2,r_3,1/c_3\}$.
Proceeding by contradiction, suppose that there are $r,x$ and $y$ such that
$0<r\leq \tilde{r}$, $x\in M$, $d(x,\partial M)>r$, $y\in \mathcal{B}\big(x,r(1-c_3r)\big)\cap T_xM$ and  
$y\notin \varphi_x\big(\mathcal{B}(x,r)\cap M\big)$.
As $x\in \varphi_x(\mathcal{B}(x,r)\cap M)$,
the segment $[x,y]$ intersects $\partial (\varphi_x\big(\mathcal{B}(x,r)\cap M\big))$.
Let $z\in [x,y]\cap \partial \varphi_x\big(\mathcal{B}(x,r)\cap M\big)$.
On the one hand, we have $\|x-z\|< \|x-y\|\leq r(1-c_3r)$.
On the other hand, since $\varphi_x^{-1}$ is a continuous function, 
$\partial \varphi_x\big(\mathcal{B}(x,r)\cap M\big)=\varphi_x\big(\partial(\mathcal{B}(x,r)\cap M\big))$,
and, because $d(x,\partial M)>r$, one has that
$\partial \varphi_x\big(\mathcal{B}(x,r)\cap M\big)=\varphi_x(M\cap \partial\mathcal{B}(x,r)))$. Then, there
exist a $z_0$, $||x-z_0||=r$, and $\varphi_x(z_0)=z$. Now by \eqref{Apeq2}, 
$$r^2=\|x-z\|^2+\|z-z_0\|^2<r^2(1-c_3r)^2+c_3^2r^4=r^2-2c_3r^3(1-c_3r)\leq r^2,$$
which is a contradiction. Then there exist a $c_5$ and $r_5$ such that for all $r\leq r_5$ and for all $x\in M$ with 
$d(x,\partial M)>r$,
 \begin{equation}\label{Apeq4}
 \mathcal{B}(x,r-c_5r^2)\cap(x+T_xM)\subset \varphi_x(\B(x,r)\cap M)\subset \mathcal{B}(x,r)\cap(x+T_xM).
 \end{equation}\\
 
\textbf{5.} Sketch of proof. Suppose that $\partial M\neq \emptyset$. For each $x^*\in \partial M$
 write $\varphi_{x^*}^*$ for the affine projection on $x^*+T_{x^*} \partial M$.
First note that for all $y$ we have $\varphi_{x^*}^*(y)=\varphi_{x^*}(y)-\langle y-x^*,u_{x^*}\rangle u_{x^*}$. Thus, by the triangle inequality,
$|\langle y-x^*,u_{x^*}\rangle |\leq \| \varphi_{x^*}^*(y)-y\| + \|\varphi_{x^*}(y)-y\|$.

Recall that $\partial M$  is of class $\mathcal{C}^2$ and take $y\in \partial M$.
  Then by applying \eqref{Apeq4} (to $M$ and $\partial M$) we have
that there are $r_6$ and $c_6$ such that for all $x^*\in \partial M$ and for all
$y\in \partial M$ with $\|x^*-y\|\leq r_6$,
$|\langle y-x^*,u_{x^*}\rangle |\leq c_6\|x^*-y\|^2$. Thus, for all $r\leq r_6/2$ and for all $x$ with $d(x,\partial M)\leq r_6/2$, and denoting by $x^*$ the projection of 
$x$ onto $\partial M$, we have

$$\partial M \cap \mathcal{B}(x,r) \subset \mathcal{B}(x,r)\cap \big\{y:|\langle y-x^*,u_{x^*}\rangle |\leq c_6\|x^*-y\|^2\big\}.$$ 
Taking now an $x$ with $d(x,\partial M)\leq r^2$ gives
\begin{multline*}
 \varphi_x(\partial M \cap \mathcal{B}(x,r)) \subset \varphi_x(\mathcal{B}(x,r)\cap \big\{y:|\langle y-x,u_{x^*}\rangle |\leq c_7r^2\big\})\\
 \subset \varphi_x(\mathcal{B}(x,r))\cap \varphi_x(\big\{y:|\langle y-x,u_{x^*}\rangle |\leq c_7r^2\big\}))
\end{multline*}
Clearly $\varphi_x(\partial M \cap \mathcal{B}(x,r))\subset \mathcal{B}(x,r) \cap (x+T_xM)$.
 
Recall that, as $\partial M$ is a compact $\mathcal{C}^2$ manifold it has a positive reach  (see Proposition 14 in \cite{tha08}). Let us denote by $c$ the reach of $\partial M$, so for all $(x^*,y)\in (\partial M)^2$ we have from Theorem 4.8 part 7 in \cite{fed59}.
 
\begin{equation}\label{reach}
 |\langle y-x^*,u_{x^*}\rangle| < \frac{\|y-x^*\|^2}{2c}.
\end{equation}
 
Notice now that for all $y\in \partial M \cap \mathcal{B}(x,r)$ we have 
$y \in   \partial M \cap \mathcal{B}(x^*,r+r^2)$, and
$$|\langle \varphi_x(y)-x,u_{x^*}  \rangle|\leq |\langle \varphi_x(y)-y,u_{x^*}\rangle |+|\langle y-x^*,u_{x^*}\rangle|+ |\langle x^*-x,u_{x^*}\rangle| $$
thus
$$|\langle \varphi_x(y)-x,u_{x^*}  \rangle|\leq \|\varphi_x(y)-y\|+|\langle y-x^*,u_{x^*}\rangle|+ |\langle x^*-x,u_{x^*}\rangle|.$$

Equations  \eqref{Apeq2} and \eqref{reach} entails,
$$|\langle \varphi_x(y)-x,u_{x^*}  \rangle|\leq c_3||x-y||^2+\frac{\|y-x^*\|^2}{2c}+\|x^*-x\|.$$
Recall that $\|x-y\|\leq r$ and $\|x-x^*\|\leq r^2$, then $|\langle \varphi_x(y)-x,u_{x^*}  \rangle|\leq r^2\big(c_3+(1+r)^2/(2c)+1\big).$

 Lastly, we   proved that there exists $c_7$ such that,
\begin{equation*}
 \varphi_x(\partial M \cap \mathcal{B}(x,r)) \subset \mathcal{B}(x,r) \cap (x+T_xM) \cap \big\{y: |\langle y-x,u_{x^*}\rangle |\leq c_7 r^2\big\}.
\end{equation*}

Now, when $r\leq r_1$, we have $\partial \varphi_x(M\cap \mathcal{B}(x,r))= 
\varphi_x(\partial(M\cap \mathcal{B}(x,r)))=\varphi_x(\partial M\cap \mathcal{B}(x,r))\cup \varphi_x(M\cap \partial \mathcal{B}(x,r))$
As in the proof of previous part, we easily obtain

$$\partial \varphi_x(M\cap \mathcal{B}(x,r))\subset (x+T_xM) \cap \big\{y: |\langle y-x,u_{x^*}\rangle |\leq c_7 r^2\big\} \cup 
(\mathcal{B}(x,r)\setminus (\mathcal{B}(x,r-c_3 r^2))  $$

Thus, arguing on the basis of connectedness arguments,
we have:
\begin{multline}\label{thegood}
 (x+T_xM) \cap \big\{y: \langle y-x,u_{x^*}\rangle \leq  - c_7 r^2\big\} \cap \mathcal{B}(x,r-c_3 r^2)\subset \varphi_x(M\cap \mathcal{B}(x,r))\\
 \subset (x+T_xM) \cap \big\{y: \langle y-x,u_{x^*}\rangle \leq -c_7 r^2\big\} \cap \mathcal{B}(x,r)
\end{multline}

\noindent or

\begin{multline}\label{theugly}
 (x+T_xM) \cap \big\{y: \langle y-x,u_{x^*}\rangle \geq  c_7 r^2\big\} \cap \mathcal{B}(x,r-c_3 r^2)\subset \varphi_x(M\cap \mathcal{B}(x,r))\\
 \subset (x+T_xM) \cap \big\{y: \langle y-x,u_{x^*}\rangle \geq c_7 r^2\big\} \cap \mathcal{B}(x,r)
\end{multline}

Because $u_x$ is the normal outer vector to $\partial M$ we have \eqref{thegood} and not \eqref{theugly}.
The choice of \eqref{thegood} comes from the orientation of $u_{x^*}$. \end{proof} 

Recall the change of variables formula
 
\begin{equation}\label{changevar}
 V\subset\mathcal{B}(x,r_{0,M}) \Rightarrow \pr_X(V)= \int_{V\cap M} f d\omega=\int_{\varphi_x(V)} {f(\varphi_x^{-1} (y)) G_x(y)}dy.
 \end{equation}

\begin{cor} \label{propproba}
Let $X_1,\ldots , X_n$ be an i.i.d. sample of $X$, a random variable whose distribution $\mathbb{P}_X$ 
fulfills condition P. Then, there exist positive constants $r_M$, $A$, $B$ and $C$ such that
if $r\leq r_M$,
then
\begin{enumerate}
 \item for all $x\in M$, $Ar^{d'}\leq \pr_X(\mathcal{B}(x,r)) \leq Br^{d'}$.
 \item For all $x\in M$ such that $d(x,\partial M)\geq r$,  $\big|\pr_X(\mathcal{B}(x,r))-f(x)\sigma_{d'}r^{d'}\big|\leq C r^{d'+1}$.
\end{enumerate}
 \end{cor}
\begin{proof}
For any $r\leq r_M$ and any $x\in M$,
 $$\pr_X(\mathcal{B}(x,r)) {\leq} f_1 \int_{\varphi_x(\mathcal{B}(x,r)\cap M)}  {G_x(y)}dy.$$
 Thus by Proposition \ref{geofond}, part $2$ we have
 \begin{equation}\label{Approb1}
  \pr_X(\mathcal{B}(x,r))\leq f_1 \sigma_{d'}r^{d'}(1+c_Mr).
 \end{equation}
 
For any $r>0$ let us consider first $x\in M$ such that $d(x,\partial M)\geq r/2$. Then
 $$\pr_X(\mathcal{B}(x,r))\geq \pr_X(\mathcal{B}(x,r/2))\geq f_0 \int_{\varphi_x(\mathcal{B}(x,r/2)\cap M)} {{G_x(y)}}dy.$$
 Since $r\leq 2r_M$, applying Proposition \ref{geofond} parts $2$ and $4$ we obtain
  \begin{equation}\label{Approb2}
 \pr_X(\mathcal{B}(x,r))\geq f_0\sigma_{d'}(r-c_Mr^2)^{d'}(1-c_Mr).
 \end{equation}
Let $x\in M$ such that $d(x,\partial M)\leq r/2$, let $x^*$ be the projection of $x$ onto $\partial M$,
 then we have 
 $$\pr_X(\mathcal{B}(x,r))\geq \pr_X(\mathcal{B}(x^*,r/2))\geq f_0 \int_{\varphi_{x^*}(\mathcal{B}(x^*,r/2)\cap M)}  {G_{x^*}(y)}dy.$$
 Since $r\leq 2r_M$, applying Proposition parts $2$ and $5$, we obtain
    \begin{equation}\label{Approb3}
 \pr_X(\mathcal{B}(x,r))\geq f_0\left(\frac{\sigma_{d'}}{2}(r)^{d'}-c_M\sigma_{d'-1}r^{d'+1}\right)(1-c_Mr).
 \end{equation}
 
 Lastly, part  $1$ is a direct consequence of \eqref{Approb1}, \eqref{Approb2} and \eqref{Approb3}.\\
 
To prove part $2$, assume $r\leq r_M$. From the Lipschitz hypothesis on $f$, we get
$$\left|\pr_X(\mathcal{B}(x,r))- f(x) \int_{\mathcal{B}(x,r)\cap M} d\omega \right|\leq r K_f  \int_{\mathcal{B}(x,r)\cap M} d\omega.$$
By \eqref{changevar},
$\int_{\mathcal{B}(x,r)\cap M} d\omega= \int_{\varphi_x(\mathcal{B}(x,r)\cap M)}  {G_{x}(y)}dy. $
Applying Proposition \ref{geofond} part $2$ there follows
$$\left|\int_{\mathcal{B}(x,r)\cap M} d\omega-\int_{\varphi_x(\mathcal{B}(x,r)\cap M)} dy \right|\leq c_{M,1} r \int_{\varphi_x(\mathcal{B}(x,r)\cap M)}dy.$$
By Proposition \ref{geofond} part $4$,
$$\left| \int_{\mathcal{B}(x,r)\cap M} d\omega-\int_{\mathcal{B}(x,r)\cap T_xM} 1 dy\right|\leq \int_{(\mathcal{B}(x,r)\setminus\mathcal{B}(x,r-c_{M,2}r^2)) \cap T_xM} dy + c_{M,1} r \int_{\mathcal{B}(x,r)\cap T_xM}dy. $$

This implies

\begin{multline*}
\left| \pr_X(\mathcal{B}(x,r)) -f(x)\sigma_{d'}r^{d'}\right|\leq rK_f\big(\sigma_{d'}r^{d'}(1-(1-c_{M,2}r)^{d'})\big)+\\f(x)\big(\sigma_{d'}r^{d'}(1-(1-c_{M,2}r)^{d'})+c_{M,1}\sigma_{d'}r^{d'+1}\big).
\end{multline*}

Thus, the choice of any constant $C_1>\sigma_{d'}(K_f+f_1dc_{M,2}+c_{M,1})$ allows us to find a suitable $R_1$.
\end{proof}

This in turns implies the following lemma.

\begin{lemma}\label{knn}
Let $X_1,\ldots , X_n$ be an i.i.d. sample of $X$, a random variable whose distribution $\mathbb{P}_X$ 
fulfills condition  P. Introduce $\rho_n=\left(2A^{-1}\left((\ln(n)/n)^{1/2} +k_n/n\right)\right)^{1/d'}$
where $A$ is the constant introduced in Corollary \ref{propproba}.
Then $\pr\left(r_n\geq \rho_n\right)\leq n^{-7}$, where $r_n$ was introduced in Definition \ref{def0}.
\end{lemma}
\begin{proof}
Let us introduce  the random variables $Z_i\equiv \#\{\{X_1,\dots,X_{i-1},X_{i+1},\dots,X_n\} \cap \mathcal{B}(X_i,\rho_n)\}$.
$Z_i$ follows a binomial distribution. We can bound $\pr(r_n\geq \rho_n)\leq \sum_i \pr(Z_i\leq k_n)$. Put $p_i=\pr_X(\mathcal{B}(X_i,\rho_n))$.
By Corollary \ref{propproba} part 1, we have $k_n/n\leq p_i$. Then, 
by Hoeffding's inequality, $\pr(r_{i,k_n}\geq \rho_n)=\pr(Z_i-np_i<k_n-np_i)\leq \exp(-2n(k_n/n-p_i)^2)$, from which it follows that
$\pr(r_n\geq \rho_n)\leq \sum_i \exp(-2n(k_n/n-p_i)^2)$.  Using again Corollary \ref{propproba} and the definition of $\rho_n$, we obtain

$$\pr(r_n\geq \rho_n)\leq n \exp\Big(-2n\Big(k_n/n+(\ln(n)/n)^{1/2}\Big)^2\Big)\leq n^{-7},$$
which concludes the proof. \end{proof}

Now that we have guaranteed that $r_n\rightarrow 0$, the following proposition will make explicit how close the projection of the sample onto the tangent space of $k_n$-nearest neighbors is to an uniform random sample on
a $d'$-dimensional sphere when the manifold $M$ has no boundary.

\begin{proposition}\label{mixturelaw}
Let $X$ be a random variable whose distribution $\mathbb{P}_X$ fulfills condition  P with $\partial M=\emptyset$.
For each $x_0\in M$, put $Y_1=\varphi_{x_0}(X)$ the projection onto the tangent space
and $Y=Y_1|\{\|X-x_0\|\leq r\}$. Then there exists a constant $a>0$ such that if $r$ is small enough, $Y\stackrel{\mathcal{L}}{=}Z$, where $Z$ has a mixture law with density
$g_{x_0}=(1-p)g_u+p g_v$ where $g_u$ is the density of a random variable uniformly distributed on $\mathcal{B}_{d'}(O,r-cr^2)$, $g_v$ is a density supported by 
$\mathcal{B}_{d'}(O,r)$, and $p\leq a r$.
\end{proposition}
\begin{proof}
Observe that $X|\{\|X-x_0\|\leq r\}$ has density 
$f_{x_0}(x)=\frac{f(x)}{\mathbb{P}_X(\mathcal{B}(x_0,r))}\ind_{M\cap\mathcal{B}(x_0,r)}$. By Corollary \ref{propproba} part $2$, for $r$ small enough,
$$ \frac{f(x)}{f(x)\sigma_{d'}r^{d'}\left(1+\frac{Cr}{f_0\sigma_{d'}}\right)}\ind_{M\cap\mathcal{B}(x_0,r)}\leq f_{x_0}(x)\leq \frac{f(x)}{f(x)\sigma_{d'}r^{d'}\left(1-\frac{Cr}{f_0\sigma_{d'}}\right)}\ind_{M\cap\mathcal{B}(x_0,r)}.$$
The random variable $Y$ has density 
 $g_{x_0}(x)=f_{x_0}(\varphi^{-1}_{x_0}(x))  {G_{x_0}(x)}\ind_{B_{x_0}}$, where $B_{x_0}=\varphi_{x_0}\big(M\cap\mathcal{B}(x_0,r)\big)$. By Proposition \ref{geofond},   {$|G_{x_0}(x)-1|\leq c_Mr$}, and so 
\begin{equation*}
\frac{1-c_Mr}{\sigma_{d'}r^{d'}\left(1+\frac{ {C}r}{f_0\sigma_{d'}}\right)}\ind_{B_{x_0}} \leq g_{x_0}(x)\leq  \frac{1+c_Mr}{\sigma_{d'}r^{d'}\left(1-\frac{ {C}r}{f_0\sigma_{d'}}\right)} \ind_{B_{x_0}}.
\end{equation*}
Note that by Proposition \ref{geofond} we have
\begin{equation*}
 \mathcal{B}\Big(x_0,r\big(1-c_Mr\big)\Big)\cap (x_0+T_{x_0}M)\subset B_{x_0}\subset \mathcal{B}\big(x_0,r\big)\cap (x_0+T_{x_0}M).
\end{equation*}
Put  $B^-(x_0,r)\equiv \mathcal{B}\big(x_0,r(1-c_Mr)\big)\cap (x_0+T_{x_0}M)$, and define 
$$p\equiv\left(1-c_Mr\right)^{d'+1}\left(\frac{C}{f_0\sigma_{d'}}r+1\right)^{-1}.$$
Observe that $g_{x_0}$ is a density and has the property that $g_{x_0}(x)\geq p g_u(x)$, $g_{x_0}(x)=0$ if $\|x-x_0\|>r$, and $p=O(r)$. This concludes the proof.\end{proof}

\begin{proposition}\label{mixturelawbound}
Let $X$ be a random variable whose distribution $\mathbb{P}_X$ fulfills condition P with $\partial M\neq\emptyset$.
For each $x_0\in M$ with $d(x_0,\partial M)\leq r^2$, put $Y_1=\varphi_{x_0}(X)$ the projection onto the tangent space
and $Y=Y_1|\{\|X-x_0\|\leq r\}$. Then there exists a constant $a>0$ such that if $r$ is small enough, $Y\stackrel{\mathcal{L}}{=}Z$, where $Z$ has a mixture law with density
$g_{x_0}=(1-p)g_u+p g_v$ where $g_u$ is the density of a random variable uniformly distributed on 
$\mathcal{B}_{d'}(O,r-cr^2)\cap\{x,\langle x,-u_{x_0^*} \rangle \geq c r^2 \}$, $g_v$ is a density supported by 
$\mathcal{B}_{d'}(O,r)$ and $p\leq a r$.
 
\end{proposition}
The proof is similar to the previous one and is left to the reader.
 
In the proofs of Theorems $1$ and $2$ we also needed to control the number of points in the mixture that are drawn 
with the non-uniform random variable. This is done with the following lemma.

\begin{lemma}\label{binom}
Suppose $T_n \rightsquigarrow Binom(k_n',q_n)$ with $q_n \sqrt{k_n'} \ln (n)\rightarrow 0$ and 
$k_n'/(\ln (n))^4 \rightarrow +\infty$.

Then, for all $\lambda>0$, for all $b>0$, and for $n$ large enough, $n \pr\left(\ln(n)T_n/\sqrt{k_n'}>\lambda\right)<n^{-b}.$
\end{lemma}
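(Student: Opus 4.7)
The plan is to apply a sharp concentration inequality for binomial tails and then exploit the two hypotheses on $(q_n, k_n')$ to push the resulting bound well past $1/n^2$. The key observation is a gap between the mean and the threshold: since $q_n\sqrt{k_n'}\ln n\to 0$, the mean
$$\mathbb{E}(T_n)=k_n' q_n=\sqrt{k_n'}\cdot q_n\sqrt{k_n'}=o\!\left(\frac{\sqrt{k_n'}}{\ln n}\right),$$
so the event $\{\ln(n) T_n/\sqrt{k_n'}>\lambda\}$ is a deviation of $T_n$ far above its mean. In particular, for $n$ large enough, $k_n' q_n\leq \lambda\sqrt{k_n'}/(2\ln n)$, so the deviation $t_n:=\lambda\sqrt{k_n'}/\ln n-k_n'q_n$ satisfies $t_n\geq \lambda\sqrt{k_n'}/(2\ln n)$.

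Next I would apply Bernstein's inequality for the binomial: writing $T_n$ as a sum of $k_n'$ i.i.d. Bernoulli$(q_n)$ with variance $q_n(1-q_n)\leq q_n$,
$$\pr\bigl(T_n-k_n'q_n\geq t\bigr)\leq \exp\!\left(-\frac{t^2}{2\,k_n'q_n+2t/3}\right).$$
With $t=t_n$, the first term $k_n'q_n$ in the denominator is $o(\sqrt{k_n'}/\ln n)$, hence negligible compared with $2t_n/3$. Substituting $t_n\geq \lambda\sqrt{k_n'}/(2\ln n)$ and simplifying gives, for a positive constant $C_\lambda$ and all $n$ large enough,
$$\pr\!\left(\tfrac{\ln(n) T_n}{\sqrt{k_n'}}>\lambda\right)\leq \exp\!\left(-C_\lambda\,\frac{\sqrt{k_n'}}{\ln n}\right).$$

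Finally I would use the assumption $k_n'/(\ln n)^4\to+\infty$: it implies $\sqrt{k_n'}/(\ln n)^2\to+\infty$, hence there exists a sequence $\omega_n\to+\infty$ with $\sqrt{k_n'}/\ln n\geq \omega_n\,\ln n$. Therefore
$$n\cdot\pr\!\left(\tfrac{\ln(n) T_n}{\sqrt{k_n'}}>\lambda\right)\leq n\exp(-C_\lambda\omega_n\ln n)=n^{1-C_\lambda\omega_n},$$
which is eventually smaller than $n^{-2}$ and thus summable, finishing the proof.

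The only nontrivial step is the Bernstein application: one must verify that the threshold really is past the mean so that the linear ``$t/3$'' term controls the denominator (rather than the variance term), and that the resulting $\exp(-C\sqrt{k_n'}/\ln n)$ decay is faster than polynomial. Both follow cleanly from the two asymptotic assumptions on $(q_n,k_n')$, so no further structural ingredient (e.g.\ truncation or coupling) is needed.
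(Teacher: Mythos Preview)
Your proof is correct and noticeably cleaner than the paper's. The paper proceeds by elementary means: it sets $j(\lambda,n)=\lfloor \lambda\sqrt{k_n'}/\ln n\rfloor$, observes that $j(\lambda,n)$ lies past the mode of the binomial (because $q_n\sqrt{k_n'}\ln n\to 0$), bounds the tail by $(k_n'-j(\lambda,n))$ times the single term $\binom{k_n'}{j(\lambda,n)}q_n^{j(\lambda,n)}(1-q_n)^{k_n'-j(\lambda,n)}$, and then uses Stirling's formula and several asymptotic simplifications to reach a bound of the form $\exp\bigl(-\lambda\sqrt{k_n'}/\ln n\bigr)$ times polynomial factors, after which $k_n'/(\ln n)^4\to\infty$ finishes the job. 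Your route via Bernstein's inequality replaces all of that combinatorial bookkeeping by a single concentration bound: once you note that the mean $k_n'q_n$ is $o(\sqrt{k_n'}/\ln n)$, the linear ``$t/3$'' term dominates the Bernstein denominator and you get the same $\exp(-C_\lambda\sqrt{k_n'}/\ln n)$ decay directly. Both arguments then conclude identically from $\sqrt{k_n'}/(\ln n)^2\to\infty$. Your approach is shorter and more transparent; the paper's has the minor advantage of being fully self-contained, relying only on Stirling rather than a named inequality.
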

\begin{proof}
 
By Bernstein Inequality we have
\begin{equation*}
  \pr\left( \frac{T_n}{k_n'}\geq q_n + \sqrt{\frac{2 q_n u}{k_n'}}+\frac{u}{k_n'} \right)\leq e^{-u}
\end{equation*}
then
\begin{equation*}
 \pr\left( \frac{T_n \ln n}{\sqrt{k_n'}}\geq \sqrt{k_n'}q_n \ln(n)+ \sqrt{2 q_n u(\ln n)^2}+\frac{u \ln n}{\sqrt{k_n'}} \right)\leq e^{-u}.
\end{equation*}
Thus, taking $u=\lambda \sqrt{k_n'}/(2 \ln n)$ and considering $n$ large enough to ensure
$$ \sqrt{k_n'}q_n\ln(n) + \sqrt{2 q_n \lambda \sqrt{k_n'}(\ln n)} \leq \lambda/2,$$
 which  is possible according to the condition $\sqrt{k_n'}q_n\ln (n)\rightarrow 0$, we have:
\begin{equation*}
  \pr\left( \frac{T_n \ln n}{\sqrt{k_n'}}\geq \lambda \right)\leq \exp\left(-\lambda \frac{\sqrt{k_n'}}{2 \ln n} \right)\leq
  \exp \left( -(\ln n)\left (\lambda \sqrt{\frac{k_n'}{(\ln n)^4}} \right) \right)
\end{equation*}
Lastly, the results follows  from $k_n'/(\ln (n))^4 \rightarrow +\infty$, taking $n$ large enough to ensure
$\lambda \sqrt{k_n'}/(\ln n)^2\geq b+1$.\end{proof}

We have proved that the projection of the $k_n$ nearest neighbors onto the tangent space 
is close to an uniform draw. The following proposition quantifies how this (unknown) projection is close to the estimation via a local PCA.

\begin{proposition}\label{hoefdvar}
 Let $X_1,\ldots,X_n$ be an i.i.d. sample in $\mathbb{R}^d$ of a law whose support is included in the unit ball.
 Let $\hat{S}_n= \frac{1}{n}\sum_i X_i'X_i$ and $S=\E(X'X)$. Then
 \begin{enumerate}
  \item[i.]  $\pr(\|\hat{S}_n-S\|_{\infty}>s)\leq 2d^2\exp(-s^2n/2)$;
  \item[ii.] If, moreover,  $X_i$ is uniformly drawn in the unit ball, then  
  $$\pr\Big(\|\hat{S}_n-\frac{1}{d+2}I_d\|_{\infty}>s\Big)\leq 2d^2\exp(-s^2n/2)$$ and there exist
   $a$ and $s_0$ such that for all $s<s_0$, $\pr(\|\hat{S}^{-1}_n-(d+2)I_d\|_{\infty}>as)\leq 2d^2\exp(-s^2n/2)$ for $n$ large enough.
 \end{enumerate}
\end{proposition}
\begin{proof}
Part $i$ is a direct consequence of the application of Hoeffding's inequality: for all $i,j$ we have
 $\pr(|\hat{S}_n-S|_{i,j}>s)\leq 2\exp(-s^2n/2)$.
 Part $ii$ is a consequence of part $i$ (for uniformly drawn $S=(d+2)^{-1}I_d$) and of the differentiability of matrix inversion (close to the identity matrix).
\end{proof}

The following result provides the uniform convergence rate of the 
local PCA  to the tangent spaces. Write $\mathcal{M}_d(\mathbb{R})$ for the space of $d\times d$ matrices with
coefficients in $\mathbb{R}$. Let $I_{d',d}\in \mathcal{M}_d(\mathbb{R})$ be the block matrix 
$$I_{d',d}=\begin{pmatrix}
I_{d'} & 0\\
0 & 0
\end{pmatrix}.
$$
 For a symmetric matrix $S\in \mathcal{M}_d(\mathbb{R})$,
put $S=Q_S\Delta_S Q'_S$, with $\Delta_S$ diagonal with $(\Delta_S)_{1,1}\geq (\Delta_S)_{2,2}\geq \ldots \geq (\Delta_S)_{d,d}$
and $Q_S$ the matrix containing (by columns) an orthonormal basis of eigenvectors.
Write $P_{S,d'}=Q_S I_{d',d} Q_S'$, that is, the matrix of the
orthogonal projection on the plane spanned by the $d'$ eigenvectors associated to the $d'$ largest eigenvalues of $S$. Note that $P_{I_{d',d},d'}=I_{d',d}$

\begin{lemma}\label{localpcanobound}
Let $X_1,\ldots,X_n$ be an i.i.d. sample drawn according to a distribution $\pr_X$ which fulfills 
condition  P, with $\partial M=\emptyset$. 
Denote by $\tilde{\varphi}_{X_i}$ the linear projection onto the tangent space at $X_i$ and by  $\hat{\varphi}_{X_i}$ the linear projection onto the 
estimation of the tangent space via local PCA.
With probability greater than $1-n^{-6}$ for $n$ large enough, there exist a constant $a$ and a matrices $E_{i,n}$ with $\|E_{i,n}\|_{\text{op}}\leq a (\sqrt{\ln(n)/k_n}+\rho_n)$
such that, for all $i$ and all $y\in \mathcal{B}(X_i,\rho_n)$ we have: 
$$\|\hat{\varphi}_{X_i}(y)-(I_d-E_{i,n})\tilde{\varphi}_{X_i}(y)\|\leq a \left(\sqrt{\ln(n)/k_n}+\rho_n\right) \|\tilde{\varphi}_{X_i}(y)\|^2.$$
\end{lemma}

\begin{proof} By Proposition \ref{hoefdvar}, for all $i$, $\pr\big(\|r_{i,k_n}^{-2}\hat{S}_{i,k_n}- r_{i,k_n}^{-2} \Sigma_i\|_{\infty} \geq t\big)\leq 2d^2 \exp(-t^2 k_n/2),$
where $\Sigma_i=\mathbb{E}(Y'Y \,|\, \|Y\|\leq r_{i,k_n})$ with $Y=X-X_i$ and $\hat{S}_{i,k_n}$ is as in Definition \ref{def0}.
Then 
$$\pr\big(\exists i: \|r_{i,k_n}^{-2}\hat{S}_{i,k_n}- r_{i,k_n}^{-2} \Sigma_i\|_{\infty} \geq t\big)\leq n 2d^2 \exp(-t^2 k_n/2).$$
Now if we apply the Borel--Cantelli lemma with $t=4\sqrt{\ln(n)/k_n}$, we get that, with probability one, for $n$ large enough,

\begin{equation}\label{lem7eq0}
\pr\left(\exists i, \|r_{i,k_n}^{-2}\hat{S}_{i,k_n}- r_{i,k_n}^{-2} \Sigma_i\|_{\infty} \geq 4 \sqrt{\ln(n)/k_n}\right) \leq 2d^2n^{-7}.
\end{equation}

Denote by $P_i$ the matrix whose first $d'$ columns form an orthonormal basis of $T_{X_i}M$, completed 
to obtain an orthonormal base of $\mathbb{R}^d$.
By Lemma \ref{knn}, since $k_n/n\rightarrow 0$, we have $\rho_n\rightarrow 0$ and,  for $n$ large enough, combining Proposition \ref{geofond} parts $3$ and $4$ and \eqref{changevar}, there exists a $c$ such that for $n$ large enough
\begin{equation} \label{lem7eq1}
\pr\left( \text{for all } i: \Big\|r_{i,k_n}^{-2} \Sigma_i-(d'+2)^{-1}P_i'I_{d',d} P_i\Big\|_{\infty}\leq c \rho_n |\{r_n\leq \rho_n\}\right)=1.
\end{equation} 

Now, \eqref{lem7eq0}, \eqref{lem7eq1} and Lemma \ref{knn} give that, for $n$ large enough,

$$\pr\left(\exists i, \left\|r_{i,k_n}^{-2}\hat{S}_{i,k_n}- (d'+2)^{-1}P_i'I_{d',d} P_i\right\|_{\infty} \geq 4\sqrt{\ln(n)/k_n}+c\rho_n\right)\leq (2d^2+1)n^{-7}.$$

Thus, by usual inequality on the norms, 
$$\pr\left(\exists i, \left\|r_{i,k_n}^{-2}\hat{S}_{i,k_n}- (d'+2)^{-1}P_i'I_{d',d} P_i\right\|_{\text{op}} \geq {4d^{-1}\sqrt{\ln(n)/k_n}}+cd^{-1}\rho_n\right)\leq (2d^2+1)n^{-7}.$$

Suppose now that, for all $i$ we have
$$\left\|r_{i,k_n}^{-2}\hat{S}_{i,k_n}- (d'+2)^{-1}P_i'I_{d',d} P_i\right\|_{\text{op}} \leq  4d^{-1}\sqrt{\ln(n)/k_n}+cd^{-1}\rho_n$$
By previous equation and  Lemma   $19$ in \cite{ACL}  we have that, for all $i$

\begin{equation}\label{bleue}
 \left\|\tilde{\varphi}_{X_i}-\hat{\varphi}_{X_i}\right\|_{\text{op}} \leq 
 \frac{\sqrt{2}(d'+2)}{d}
 \left(
 4\sqrt{\ln(n)/k_n}+c\rho_n\right)
\end{equation}


Now suppose that $r_n\leq \rho_n$, which according to Lemma \ref{knn} it happens with probability greater than $1-n^{-7}$.
Consider $y\in M\cap \mathcal{B}(X_i,\rho_n)-X_i$. Introduce $E_{i,n}$ the matrix of the application $\tilde{\varphi}_{X_i}-\hat{\varphi}_{X_i}$ and
$\Phi_{X_i,k}$ the function introduced in the proof of points 2 and 3 in Proposition \ref{geofond}, we get
\begin{eqnarray*}
  \nonumber & y= &  \begin{pmatrix} \tilde{\varphi}_{X_i}(y) \\ \Phi_{X_i,d'+1}(\tilde{\varphi}_{X_i}(y))\\ \vdots \\ \Phi_{X_i,d}(\tilde{\varphi}_{X_i}(y)) \end{pmatrix} 
   \text{ so } \hat{\varphi}_{X_i}(y)=\tilde{\varphi}_{X_i}(y)+E_{i,n}\tilde{\varphi}_{X_i}(y)+E_{i,n}\begin{pmatrix} 0_{d'} \\ \Phi_{X_i,d'+1}(\tilde{\varphi}_{X_i}(y))\\ \vdots \\ \Phi_{X_i,d}(\tilde{\varphi}_{X_i}(y)) \end{pmatrix}
\end{eqnarray*}

and so, for all $i$, there exists $E_{i,n}$ a matrix such that, 
$$\|E_{i,n}\|_{\text{op}}\leq \frac{\sqrt{2}(d'+2)}{d}
 \left(
 4\sqrt{\ln(n)/k_n}+c\rho_n\right).$$
 Then,
$$\|\hat{\varphi}_{X_i}(y)-(I_d-E_{i,n})\tilde{\varphi}_{X_i}(y)\|\leq (d-d')\lambda_M \frac{\sqrt{2}(d'+2)}{d}
 \left(
 4\sqrt{\ln(n)/k_n}+c\rho_n\right) \|\tilde{\varphi}_{X_i}(y)\|^2$$

That concludes the proof. \end{proof}

\begin{lemma}\label{localpcabound}
Let $X_1,\ldots,X_n$ be an i.i.d. sample drawn according to a distribution $\pr_X$ which fulfills condition  P. For a given $\lambda>0$, 
introduce $I_n(\lambda)=\{i: d(X_i,\partial M)\leq \lambda (\ln n)/n, r_{i,k_n}\geq \sqrt{d(X_i,\partial M)} \}$. 
Denote by $\tilde{\varphi}_{X_i}$ the linear projection onto the tangent space at $X_i$ and by  $\hat{\varphi}_{X_i}$ the linear projection onto the 
estimation of the tangent space via local PCA.
With probability greater than $1-n^{-6}$ for $n$ large enough, there exist a constant $a$ and a matrices $E_{i,n}$ with $||E_{i,n}||_{\text{op}}\leq a (\sqrt{\ln(n)/k_n}+\rho_n)$
such that, for all $i\in I_n(\lambda)$ and all $y\in \mathcal{B}(X_i,\rho_n)$ we have: 
$$||\hat{\varphi}_{X_i}(y)-(I_d-E_{i,n})\tilde{\varphi}_{X_i}(y)||\leq a \left(\sqrt{\ln(n)/k_n}+\rho_n\right) ||\tilde{\varphi}_{X_i}(y)||^2.$$
\end{lemma}
\begin{proof}
 The proof is exactly the same as the previous one, the only difference being now that,
 up to a change of basis, $r_{i,k_n}^{-2}\Sigma_i$ is no longer close to $(d'+2)^{-1}I_{d',d}$, but rather to 
 a diagonal matrix with an eigenvalue $(d'+2)^{-1}$ eigenvalues of order $d'-1$ and $\beta_{d'}>0$ eigenvalue of order $1$.
\end{proof}
 
\section*{Acknowledgements} This research has been partially supported by MATH-AmSud grant
16-MATH-05 SM-HCD-HDD.

\end{document}